\newcommand{\be}{\begin{equation}}
\newcommand{\ee}{\end{equation}}
\newcommand{\bel}[1]{\begin{equation}\label{#1}}
\newcommand{\bea}{\begin{eqnarray}}
\newcommand{\eea}{\end{eqnarray}}
\newcommand{\balign}{\begin{align}}
\newcommand{\ealign}{\end{align}}
\newcommand{\ba}{\begin{array}}
\newcommand{\ea}{\end{array}}
\newcommand{\bfig}{\begin{figure}}
\newcommand{\efig}{\end{figure}}
\newcommand{\eref}[1]{(\ref{#1})}
\newcommand{\bra}[1]{\mbox{$\langle \, {#1}\, |$}}
\newcommand{\ket}[1]{\mbox{$| \, {#1}\, \rangle$}}
\newcommand{\exval}[1]{\mbox{$\langle \, {#1}\, \rangle$}}
\newcommand{\inprod}[2]{\mbox{$\langle \, {#1} \, | \, {#2} \, \rangle$}}
\newcommand{\Prob}[1]{\mbox{${\rm Prob}\left[ \, {#1}\, \right]$}}
\newcommand{\comm}[2]{\mbox{$[\,{#1}\,,\,{#2}\,]$}}
\newcommand{\half}{\frac{1}{2}}
\newcommand{\bfx}{\vec{x}}
\newcommand{\bfy}{\vec{y}}
\newcommand{\rme}{\mathrm{e}}
\newcommand{\rmd}{\mathrm{d}}
\newcommand{\C}{{\mathbb C}}
\newcommand{\R}{{\mathbb R}}
\begin{document}

\title*{Duality relations for the ASEP conditioned on a low current}
\author{G.M.~Sch\"utz}
\institute{
G.M.~Sch\"utz \at Institute of Complex Systems II,
Forschungszentrum J\"ulich, 52425 J\"ulich, Germany \email{g.schuetz@fz-juelich.de}}

\maketitle

\abstract{We consider the asymmetric simple exclusion process (ASEP) on a finite lattice
with periodic boundary conditions, conditioned to carry an atypically low current. 
For an infinite discrete set of currents, parametrized by the driving strength $s_K$, 
$K \geq 1$, we prove duality 
relations which arise from the quantum algebra $U_q[\mathfrak{gl}(2)]$ symmetry of the generator of the process with reflecting boundary conditions.
Using these duality relations we prove on microscopic level 
a travelling-wave property of the conditioned 
process for a family of shock-antishock measures for $N>K$
particles: If the initial measure is a member of this family with $K$ microscopic 
shocks at positions
$(x_1,\dots,x_K)$, then the measure at any time $t>0$ of the process with driving strength $s_K$ is a convex combination of such measures with shocks at positions $(y_1,\dots,y_K)$. which can be expressed in terms
of $K$-particle transition probabilities of the conditioned ASEP with driving strength $s_N$.
}

\section{Introduction}
\label{sec:1}

In the asymmetric simple exclusion process (ASEP) \cite{Spit70,Ligg85,Ligg99,Schu01}
each lattice site $k$ on a lattice $\Lambda = (1,\dots, L)$ is occupied by at most 
one particle, indicated by occupation numbers $\eta(k) \in \mathbb{S} = \{0,1\}$. 
We denote by $\eta = (\eta(1), \dots, \eta(L))\in \Omega = \mathbb{S}^L$
a configuration $\eta$ of the particle system.
Informally speaking, in one dimension particles  try to 
jump to the right 
with rate $r = wq$ and to the left 
with rate $\ell = wq^{-1}$. The jump attempt is successful if the target site is empty,
otherwise the jump attempt is rejected. 
The invariant measures of the ASEP 
with periodic boundary conditions are well-known:
For fixed particle number $N$ these are the uniform measures.
From these one can construct the grandcanonical Bernoulli 
product measures with fugacity $z=\rho/(1-\rho)$ where $\rho=N/L$ is the particle density 
on the torus. For these measures, where each lattice site $k$ is occupied with probability
$\rho$ independently of all other sites, one has a stationary particle
current $j^\ast = (r-\ell) \rho(1-\rho)$, corresponding to
an expected mean time-integrated current $\exval{J(t)}/t = j^\ast$.

In the context of macroscopic fluctuation theory \cite{Bert15} one is interested in 
conditioning the process on fluctuations around 
some atypical mean time-integrated current $j\neq j^\ast$. 
A question of fundamental interest is then 
which macroscopic density profile is most
likely to realize such a large deviation of the current inside a very large 
(more precisely: infinite) time interval of conditioning. 
This large-deviation problem thus concerns an untypical
ensemble of trajectories of the process. This ensemble is usually not defined by
$j$ but
via Legendre transformation in terms of the canonically
conjugate driving strength $s(j)$ with $s(j^\ast)=0$.
Interestingly,
for conditioning on a lower-than-typical current (i.e., for $s<0$),
it was found by Bodineau and Derrida
\cite{Bodi05} for the weakly asymmetric simple exclusion process 
that there is a dynamical phase transition: For currents slightly below the
typical value  $j^\ast$ the optimal macroscopic profile is constant as it is 
for $j^\ast$. However,
below a critical threshold $j_c < j^\ast$ (corresponding to some $s_c<0$) 
the optimal macroscopic profile 
is a travelling wave with a shape resembling a smoothened shock/antishock pair.

More recently, in a similar setting, but for finite duration $t$ of conditioning, 
the microscopic structure of a travelling wave in the ASEP (not weakly!)
was elucidated in detail for a specific choice of negative driving strength \cite{Beli13}: 
One considers a certain family of
inhomogeneous product measures $\mu_{k}$, 
indexed by a lattice site $k$, where the microscopic
density profile as function of the position on the
lattice has a density-jump 
at position $k$ on the torus, analogous to a shock on macroscopic scale. 
At time $t=0$
$N$ particles ($N$ arbitrary) 
are distributed according to
the restricted
measure $\mu_{k}^N \propto \mu_k \delta_{\sum_k \eta(k), N}$. Then at any future time $t>0$ 
of the conditioned dynamics the measure is a convex combination $\mu_{k}^N(t) = 
\sum_l c(l,t|k,0) \mu_{l}^N$
of such measures. 
The weights $c(l,t|k,0)$ are the transition probabilities of a single biased random walk, 
thus suggesting that a shock in a macroscopic travelling wave performs a biased 
random walk on 
microscopic level.

In this work we trace back the mathematical origin of this rigorous result
to certain algebraic properties of the generator of the process. Then, using these 
properties, we go beyond \cite{Beli13} to derive a family of duality relations 
that allow us 
to construct more complex microscopic structures corresponding
to more general macroscopic optimal profiles. The starting point is the well-known fact that
for reflecting boundaries, where the process is reversible, 
the generator of the process commutes with the generators 
of the quantum algebra $U_q[\mathfrak{gl}(2)]$ \cite{Alca93}. This fact has been 
used in \cite{Sand94} to construct the canonical reversible measures and in 
\cite{Schu97} to derive self-duality relations for the unconditioned ASEP. 
\footnote{We mention that the deep link between 
duality of Markov processes and symmetries of its generator, 
first noted in \cite{Schu94}, 
that we exploit here 
was given a systematic abstract treatment in \cite{Giar09}. More recently many concrete
symmetry-based dualities for interacting particle systems were derived using this approach
\cite{Ohku10,Cari13,Cari14,Cari15,Boro14,Corw15,Beli15b,Kuan15,Cari15b}.}

On the torus, however, the symmetry of the generator under $U_q[\mathfrak{gl}(2)]$ breaks 
down. Nevertheless, some time ago Pasquier and Saleur \cite{Pasq90} found 
intertwining relations involving the generators of 
$U_q[\mathfrak{gl}(2)]$ and the Heisenberg quantum Hamiltonian with a 
boundary twist. This quantum Hamiltonionan operator became later to be known to be
closely related to the generator of the conditioned ASEP \cite{Derr98}.  
Here we present a new proof for the
results of \cite{Schu97} (and correct some typos there) for reflecting boundaries
and make use of intertwining relations 
of \cite{Pasq90} (correcting some typos also in that paper) to derive 
an infinite discrete family of duality relations
for the ASEP with {\it periodic} boundary conditions. 
These new duality relations apply to the 
process conditioned on fluctuations around some untypically low mean 
time-integrated current. 

The simplest of these duality relations
proves that the homogeneous Bernoulli measure is the
invariant measure for the unconditioned ASEP with periodic boundary conditions. 
The derivation of this well-known fact
from the $U_q[\mathfrak{gl}(2)]$-symmetry 
of the process
with reflecting boundary questions is remarkable in so far as it raises the interesting question whether one
can construct the matrix product measures \cite{Ferr07,Evan09} of the periodic 
multi-species ASEP 
from the $U_q[\mathfrak{gl}(n)]$-symmetry of that process with reflecting boundaries\cite{Alca93,Beli15a}.

From the non-trivial higher order duality relations we obtain an infinite discrete family of
new microscopic ``travelling waves''
for the conditioned process.

\section{Definitions and notation}
\label{Sec:Definotat}

It is convenient to work with the quantum Hamiltonian formalism \cite{Lloy96,Schu01} 
where the generator of the process is represented by a matrix which in a judiciously chosen basis turns out to be closely related to the Hamiltonian operator of a physical 
quantum  system. We first introduce some notation and then describe
in some detail the tools required for the quantum Hamiltonian formalism
for the benefit of readers not familiar with this approach.

\subsection{State space and configurations}
\label{Sec:Defconfig}

We say that
a site $k \in \Lambda$ is occupied by a particle if $\eta(k) = 1$ or that
it is empty if $\eta(k) = 0$. 
The fact that a site can be occupied by at most one particle is the exclusion principle.
Occasionally we denote configurations with a fixed number of $N$ particles
by $\eta_{N}$. 
The set of all configurations with $N$ particles is denoted $\Omega_N$.
We also define
\bel{lonv}
\upsilon(k) := 1-\eta(k)
\ee
and the particle numbers
\bel{partnum}
N(\eta) = \sum_{k=1}^L \eta(k), \quad V(\eta) = \sum_{k=1}^L \upsilon(k) = L-N.
\ee

A useful alternative way of presenting uniquely of configuration $\eta_N$ is
obtained by labelling the particles 
consecutively from left to right (clockwise) by 1 to $N$ and their positions
on $\Lambda$ by $x_i \ \mbox{mod} \ L$.
A configuration $\eta$ is then represented by the set
$\bfx := \{ x \, :\, \eta(x) = 1 \}$.
We call this notation the position representation. 
We shall use interchangeably the arguments $\eta$, $\bfx$, 
for functions of the configurations. When the argument is clear from context
it may be omitted.
We note the trivial, but frequently used identities $N(\eta)  \equiv  N(\bfx) = |\bfx|$
and
\be
\label{occupos}
\eta(k) = \sum_{i=1}^{N(\bfx)} \delta_{x_i,k}.
\ee
For a configuration $\eta\equiv \bfx$ we also define 
the number $N_k(\eta)$ of 
$A$-particles to the left of a particle
at site $k$
\bel{NyMx}
N_k(\eta) := \sum_{i=1}^{k-1} \eta(i) = \sum_{i=1}^{N(\eta)} \sum_{l=1}^{k-1} \delta_{x_i,l}.
\ee

Furthermore, for $1\leq k \leq L-1$ we define the local permutation
\bel{etaexch}
\pi^{kk+1}(\eta) = \{ \eta(1), \dots \eta(k-1), \eta(k+1), \eta(k), \eta(k+2), \dots ,\eta(L) \}
=: \eta^{kk+1} ,
\ee
and for $k=L$ we define 
\bel{etaexchL}
\pi^{L1}(\eta) = \{ \eta(L), \dots , \eta(k),  \dots ,\eta(1) \}
=: \eta^{L1} .
\ee
The space reflection is defined by
\bel{etarefl}
R(\eta) = \{ \eta(L), \eta(L-1), \dots , \eta(1)\}
\ee
corresponding to $R(\eta(k)) = \eta(L+1-k)$ for the occupation numbers.

\subsection{Definition of the ASEP}
\label{Sec:Defprocess}

For functions $f:\mathbb{S}^L \to \C$ the ASEP $\eta_t$ with periodic boundary conditions
and hopping asymmetry $q$ is defined by the generator
\bel{generator}
\mathcal{L} f(\eta) := {\sum}_{\eta'\in\mathbb{S}^L}' w(\eta\to\eta') [f(\eta') - f(\eta)]
\ee
where 
the transition rates between configurations
\bel{rates}
w(\eta\to\eta') = \sum_{k=1}^{L} w^{kk+1}(\eta) \delta_{\eta',\eta^{kk+1}}
\ee
are defined in terms of the local hopping rates
\bel{localrates}
w^{kk+1}(\eta) = w \left[ q  \eta(k) \upsilon(k+1) +
q^{-1}  \upsilon(k) \eta(k+1) \right].
\ee
The prime at the summation symbol \eref{generator} indicates the
absence of the term $\eta' = \eta$ which is omitted since $w(\eta\to\eta)$ is not
defined.\footnote{When the summation is over $\Omega=\mathbb{S}^L$ we shall usually 
omit the set $\mathbb{S}^L$ under the summation symbol and simply write $\sum_{\eta}$.} 
The transition rates are non-zero only for a transition from a configuration 
$\eta$ to a configuration $\eta'=\eta^{kk+1}$ defined by \eref{etaexch}. 

We shall assume partially asymmetric hopping $q\neq 0,1,\infty$. 
The constant $w \neq 0$ sets the time scale of the process. 
On the torus we identify increasing order of the lattice index with the clockwise direction.
In the case of reflecting boundary conditions no jumps from site 1 to the left and no jumps from site $L$ to the right are allowed. 
Increasing order of the lattice index is identified with the direction left to right. 
The upper summation limit $L$ in \eref{rates}
has to be replaced by $L-1$, giving rise to a generator that we denote by
$\tilde{\mathcal{L}}$. 

In order study fluctuations around
some untypical integrated current,
parameterized in terms of the driving strength $s$,
we define the weighted transition rates
\bea
\label{wlocalratesbulk}
w_s^{kk+1}(\eta) & = & w \left[ q \rme^{s} \eta(k) \upsilon(k+1) +
q^{-1} \rme^{-s} \upsilon(k) \eta(k+1) \right], \quad 1\leq k \leq L-1 \\
\label{wlocalratesbound}
w_{s,\bar{s}}^{L1}(\eta) & = & w \left[ q \rme^{s+\bar{s}}  \eta(L) \upsilon(1) +
q^{-1} \rme^{-s-\bar{s}} \upsilon(L) \eta(1) \right], \quad k=L.
\eea
This leads us to define
the weighted generators
\bea
\label{wgeneratorrefl}
\tilde{\mathcal{L}}_s f(\eta) & := & \sum_{k=1}^{L-1} w_s^{kk+1}(\eta) f(\eta^{kk+1}) 
- w^{kk+1}(\eta)f(\eta) \\
\label{wgeneratorper}
\mathcal{L}_{s,\bar{s}} f(\eta)
 & := & \tilde{\mathcal{L}}_s f(\eta) 
 + w_{s,\bar{s}}^{L1}(\eta) f(\eta^{L1}) - w^{L1}(\eta)f(\eta).
\eea
The weighted generators give a weight $\rme^{s}$ ($\rme^{-s}$) to each particle jump to the
right (left) anywhere on the lattice and for the process with periodic boundary conditions 
an extra weight $\rme^{\bar{s}}$ ($\rme^{-\bar{s}}$) to each particle 
jump to the
right (left) across bond $(L,1)$.
Thus each random trajectory 
of the process is given a weight
$\rme^{sJ(t)+\bar{s}J_L(t)}$ where $J(t)$ is the time-integrated total 
current, i.e., the total number
of all particle jumps to the right up to time $t$ minus the total number
of all particle jumps to the left up to time $t$ and $J_L(t)$ is the 
time-integrated  current across bond $(L,1)$. Notice that the diagonal part
of the weighted generator does not depend on the driving strength 
$s$ or $\bar{s}$, 
reflecting the fact that the random times
after which jumps occur remain unchanged. 
For details on this construction see e.g. \cite{Harr07,Jack10,Chet14} 
and specifically for the present context \cite{Schu15}.

We fix more notation and summarize some well-known basic facts from the theory of Markov processes.
For a probability distribution $P(\eta)$ we denote the expectation of a continuous function 
$f(\eta)$ by $\exval{f}_P := \sum_{\eta} f(\eta) P(\eta)$.
The transposed generator is defined by
$\mathcal{L}^T f(\eta) := {\sum}'_{\eta' \in\mathbb{S}^L} f(\eta') \mathcal{L} \, \mathbf{1}_{\eta'}(\eta)$
where $\mathbf{1}_{\eta'}(\eta) = \delta_{\eta,\eta'}$.
With this definition \eref{generator} yields for a probability distribution $P(\eta)$ the
{\it master equation} 
\bel{transgenerator}
\mathcal{L}^T P(\eta) = 
{\sum}'_{\eta'} [ w(\eta'\to\eta) P(\eta') - w(\eta\to\eta') P(\eta)].
\ee

An invariant measure is denoted $\pi^\ast(\eta)$ and defined by 
\be
\label{invmeasure1}
\mathcal{L}^T \pi^\ast(\eta) = 0
\ee
and the normalization $
\sum_{\eta} \pi^\ast(\eta) = 1$.
An unnormalized measure with the property \eref{invmeasure1} is 
denoted $\pi(\eta)$.
The time-reversed process is defined by 
\bel{revgenerator}
\mathcal{L}^{rev} f(\eta) := {\sum_{\eta'}}' w^{rev}(\eta\to\eta') [f(\eta') - f(\eta)]
\ee
with 
$w^{rev}(\eta\to\eta') = w(\eta'\to\eta) \pi(\eta') / \pi(\eta)$.
The process is reversible if $\mathcal{L}^{rev} = \mathcal{L}$ which means that
the rates satisfy the detailed balance condition
$\pi(\eta) w(\eta\to\eta') = w(\eta'\to\eta) \pi(\eta')$. 
A probability distribution satisfying the detailed balance condition is a 
reversible measure. It is easily verified that the ASEP with reflecting boundary
conditions is reversible with reversible measure
\bel{ASEPrevmeas}
\pi(\eta) = q^{\sum_{k=1}^{L} (2k+\mu) \eta(k)}
= \rme^{\mu N(\eta)} q^{2\sum_{i=1}^{N(\eta)} x_i} .
\ee
for any $\mu\in\R$.

We define the transition matrix $H$ of the process by the matrix elements
\bel{transmatrix}
H_{\eta'\eta} = \left\{ \ba{ll} 
- w(\eta\to\eta') \quad & \eta \neq \eta' \\
{\sum}'_{\eta'} w(\eta\to\eta') & \eta = \eta' .
\ea \right.
\ee
with $w(\eta\to\eta')$ given by \eref{rates}. One has
\bel{generator2}
\mathcal{L} f(\eta) = - \sum_{\eta'} f(\eta') H_{\eta'\eta}, \quad
\mathcal{L}^T P(\eta) = - \sum_{\eta'} H_{\eta\eta'} P(\eta').
\ee
Notice that here the sum includes the term $\eta'=\eta$. 
In slight abuse of language we shall
also call $H$ the generator of the process.
Analogously we also define the weighted transition matrix 
where the off-diagonal
elements are replaced by the weighted rates \eref{wlocalratesbulk},
\eref{wlocalratesbound}. 

For an unnormalized stationary distribution we define the diagonal matrix
$\hat{\pi}$ with the stationary weights $\pi(\eta)$ on the diagonal.
For ergodic processes with finite state space one has $0< \pi(\eta) <\infty$ for all $\eta$.
In terms of this diagonal matrix we can write the generator of the reversed dynamics as 
$H^{rev}  = \hat{\pi} H^T \hat{\pi}^{-1}$.
The reversibility condition $H^{rev}=H$ then reads
\bel{revASEPrefl}
\hat{\pi}^{-1} H \hat{\pi} = H^T.
\ee
Therefore, if one finds a diagonal matrix with the property \eref{revASEPrefl}
then this matrix defines a reversible measure.

\subsection{Representation of the generator in the natural tensor basis}

In order to write the matrix $H$ explicitly we assign to
each configuration $\eta$ a canonical basis vector $\ket{\eta}$.
We choose the binary ordering $\iota(\eta) = 1+ \sum_{k=1}^L \eta(k) 2^{k-1}$
of the basis. Defining single-site basis vectors of dimension 2 
\be
|0) := \left( \ba{c} 1 \\ 0 \ea \right) , \quad |1) := \left( \ba{c} 0 \\ 1 \ea \right)
\ee
one then has $\ket{\eta} = |\eta(1)) \otimes \dots \otimes |\eta(L))$
where $\otimes$ denotes the tensor product. These basis vectors span the complex
vector space $(\C^2)^{\otimes L}$ of dimension $d=2^L$. We also define transposed basis vectors
$\bra{\eta} := \ket{\eta}^T$ and the inner product 
$\inprod{v}{w} := \sum_{\eta} v(\eta) w(\eta)$.

Furthermore we define the two-by-two Pauli matrices
\bel{Pauli}
\sigma^x := \left( \ba{cc} 0 & 1 \\ 1 & 0 \ea \right), \quad
\sigma^y := \left( \ba{cc} 0 & -i \\ i & 0 \ea \right), \quad
\sigma^z := \left( \ba{cc} 1 & 0 \\ 0 & -1 \ea \right)
\ee
and the two-dimensional unit matrix $\mathds{1}$. From these we construct
\bel{ladder}
\sigma^\pm = \half (\sigma^x \pm i \sigma^y), \quad \hat{n} = \half (\mathds{1} - \sigma^z), \quad \hat{\upsilon} = \half (\mathds{1} + \sigma^z).
\ee
These matrices satisfy the following relations:
\bel{projhat}
\ba{llll} 
\displaystyle
\sigma^+ \sigma^- = \hat{\upsilon}, \quad & \sigma^- \sigma^+ = \hat{n}, \quad &
\hat{n} \hat{\upsilon} = 0, \quad & \hat{\upsilon} \hat{n} = 0 \\
\sigma^+ \hat{n} = \sigma^+, & \hat{n} \sigma^+ = 0, &
\sigma^+ \hat{\upsilon} = 0, & \hat{\upsilon} \sigma^+ =  \sigma^+, \\
\sigma^- \hat{n} = 0, & \hat{n} \sigma^- = \sigma^-, & 
\sigma^- \hat{\upsilon} = \sigma^-, & \hat{\upsilon} \sigma^- = 0.
\ea
\ee
With the occupation variables \eref{lonv} for a single site we have the projector property
\bel{proj}
\hat{n} |\eta) = \eta |\eta), \quad \hat{\upsilon} |\eta) = \upsilon |\eta).
\ee
Having in mind the action of these operators to the right on a column vector, 
we call $\sigma^-$ a creation operator, and $\sigma^+$ 
annihilation operators. When acting to the left on a bra-vector the roles are
interchanged: $\sigma^+$ acts as creation operator and $\sigma^-$ as
annihilation operator.

For $L>1$ and any linear combination $u$ of these matrices we define the tensor operators 
$u_k := \mathds{1}^{\otimes k-1} \otimes u \otimes \mathds{1}^{\otimes L-k}$.
By convention the zero'th tensor power of any matrix is the $c$-number 1
and $u^{\otimes 1} = u$. We note that also
the tensor occupation operators $\hat{n}_k$ act as projectors
\be
\label{projabL}
\hat{n}_k \ket{\eta} = \eta(k) \ket{\eta} = \sum_{i=1}^{N(\eta)} \delta_{x_i,k} \ket{\eta}, 
\ee
with the occupation variables $\eta(k)$ or particle coordinates $x_i$ respectively
understood as functions of $\eta$.
The proof is trivial: The first equality is inherited from \eref{proj} by 
multilinearity of the tensor product, the
second equality follows from \eref{occupos}.
Multilinearity of the tensor product also yields
$u_k v_{k+1} = \mathds{1}^{\otimes (k-1)} \otimes [(u \otimes \mathds{1}) (\mathds{1} \otimes v)]
\otimes \mathds{1}^{\otimes (L-k-1)} = \mathds{1}^{\otimes (k-1)} \otimes ( u \otimes v )
\otimes \mathds{1}^{\otimes (L-k-1)}$ and the commutator property
$u_k v_l = v_l u_k$ for $k\neq l$.
For $k=l$ one has relations analogous to \eref{projhat}.

It turns out to be convenient to introduce parameters $\alpha=q \rme^s$ and 
$\beta = \rme^{\bar{s}}$ and express for periodic boundary conditions the weighted generator as 
$H(q,\alpha,\beta)$ with the convention $H(q,q,1) = H$ for the unweighted generator.
Similarly one writes $\tilde{H}(q,\alpha)$ for the weighted generator with reflecting
boundary conditions with the convention $\tilde{H}(q,q)=\tilde{H}$.
With these definitions the weighted generators $\tilde{H}(q,\alpha)$ 
and $H(q,\alpha,\beta)$ defined by \eref{wgeneratorrefl} and \eref{wgeneratorper} 
resp. become
\bea
\label{ASEPgenrefl}
\tilde{H}(q,\alpha) & = & \sum_{k=1}^{L-1} h_{k,k+1}(q,\alpha) \\
\label{ASEPgenper}
H(q,\alpha,\beta) & = &  \tilde{H}(q,\alpha) + h_{L,1}(q,\alpha,\beta)
\eea
with the hopping matrices
\bel{hoppingbulk}
h_{k,k+1}(q,\alpha) = - w \left[ \alpha \sigma^+_k \sigma^-_{k+1} - 
q \hat{n}_k \hat{\upsilon}_{k+1}
+  \alpha^{-1} \sigma^-_k \sigma^+_{k+1} - q^{-1} \hat{\upsilon}_k \hat{n}_{k+1} \right]
\ee
and
\bel{hoppingbound}
h_{L,1}(q,\alpha,\beta) = - w \left[ \alpha \beta \sigma^+_k \sigma^-_{k+1} - 
q \hat{n}_k \hat{\upsilon}_{k+1}
+  (\alpha \beta)^{-1} \sigma^-_k \sigma^+_{k+1} - q^{-1} \hat{\upsilon}_k \hat{n}_{k+1}) \right].
\ee

It is useful to introduce the space-reflection operator $\hat{R}$ defined by 
\bel{Def:Reflection}
\hat{R} u_{k} \hat{R}^{-1} = u_{L+1-k}
\ee 
for local one-site operators $u_k$ and the diagonal transformations
\bea
\label{Vg}
V(\gamma) & = & \gamma^{\frac{1}{4} \sum_{k=1}^{L} (2k-L-1) \sigma_k^z} = 
\gamma^{-\frac{1}{2} \sum_{k=1}^{L} (2k-L-1) \hat{n}_k} \\
\label{numb}
W(z) & = & z^{\hat{N}}
\eea
with the number operator $\hat{N}=\sum_{K=1}^L \hat{n}_k$.
We note the properties
\bea
\label{Vgsym}
\hat{R} V(\gamma) \hat{R}^{-1} & = & V^{-1}(\gamma) = V(\gamma^{-1}), \\
\label{numbsym}
\hat{R} W(z) \hat{R}^{-1} & = & W(z) = W^{-1}(z^{-1}), \\
\label{symgen1}
\hat{R} H(q,\alpha,\beta) \hat{R}^{-1} & = & H(q,\alpha^{-1},\beta^{-1}) \\
\label{HTper}
H^T(q,\alpha,\beta) & = & H(q,\alpha^{-1},\beta^{-1})  \\
\label{symgen3}
W H(q,\alpha,\beta) W^{-1} & = & H(q,\alpha,\beta).
\eea

Moreover, the transformation property
\bel{Vgsig}
V(\gamma) \sigma_k^\pm V^{-1}(\gamma) = \gamma^{\pm \half (2k-L-1)} \sigma_k^\pm 
\ee
yields
\bea
\label{Hgaugeper}
V(\gamma) H(q,\alpha,\beta) V^{-1}(\gamma) & = & H\left(q,\alpha \gamma^{-1},\beta \gamma^L\right) \\
\label{Hgaugerefl}
V(\gamma) \tilde{H}(q,\alpha) V^{-1}(\gamma) & = & 
\tilde{H}\left(q,\alpha \gamma^{-1}\right).
\eea
Thus for periodic boundary conditions global conditioning and
local conditioning are related by a similarity transformation,
while for reflecting
boundary conditions the conditioning can be completely absorbed into a similarity transformation.
One also finds with $\gamma=q^2$, $\alpha = q$ the reversibility relation
$V(q^2) \tilde{H}(q) V^{-1}(q^2) =
\tilde{H}^T$.
By \eref{revASEPrefl} this shows that 
$\hat{\pi} = V^{-1}(q^2)$
is the matrix form of the reversible measure
\eref{ASEPrevmeas} with $\mu = -L-1$ \cite{Schu97}.\footnote{This is equivalent to 
Eq. (2.14) in \cite{Schu97}, which, however, has a sign error and should read 
$H^T = V^{-2} H V^2$.}

For driving strength
$s_0:=-\ln{(q)}$
corresponding to $\alpha=1$
one finds for $1\leq k \leq L-1$
\be 
h_{k,k+1}(q,1) = 
- \frac{w}{2} \left[\sigma^x_k \sigma^x_{k+1} + \sigma^y_k \sigma^y_{k+1}
+ \Delta (\sigma^z_k \sigma^z_{k+1} - \mathbf{1}) + h (\sigma^z_k - \sigma^z_{k+1}) \right]
\ee
with 
\be 
\Delta = \half (q+q^{-1}), \quad h = \half (q-q^{-1})
\ee
and the unit-matrix $\mathbf{1}$ of dimension $2^L$. Notice that for periodic boundary conditions
the local divergence terms $\sigma^z_k - \sigma^z_{k+1}$ 
cancel. For reflecting boundaries the local divergence term contributes 
opposite boundary fields $h (\sigma^z_L - \sigma^z_1)$.
With the further choice $\bar{s}_0 = 0$ corresponding to
$\beta=1$ the weighted generator \eref{ASEPgenper}
becomes the Hamiltonian operator $H(q,1,1)$
of the ferromagnetic Heisenberg spin-1/2 quantum chain, while for
$\beta \neq 1$ one has the Heisenberg chain 
$H(q,1,\beta)$ with twisted boundary conditions \cite{Pasq90}.

Since particle number is conserved the process is trivially reducible. For each particle
number $N$ one has an irreducible process $\eta_{N,t}$ on the state space $\Omega_N$. 
We define the projector 
\bel{projectorN}
\hat{\mathbf{1}}_N := \sum_{\eta \in \Omega_N} \ket{\eta}\bra{\eta}
\ee
where we have used the quantum mechanical 
ket-bra convention $\ket{\eta}\bra{\eta} \equiv \ket{\eta}\otimes \bra{\eta}$ for the tensor
product of two vectors. Thus one obtains the generator
\bel{generatorN}
H_N(q,\alpha,\beta) := \hat{\mathbf{1}}_N \ H(q,\alpha,\beta) \ \hat{\mathbf{1}}_N
\ee
for the $N$-particle weighted ASEP. 

Notice that $\hat{\mathbf{1}}_N$ acts as
unit matrix on the irreducible subspace corresponding to particle number $N$.
The unit matrix $\mathbf{1}$ in the full space has the useful representation
\bel{unit}
\mathbf{1} = \sum_{\eta \in \Omega} \ket{\eta}\bra{\eta}.
\ee

\subsection{The quantum algebra $U_q[\mathfrak{gl}(2)]$}

The quantum algebra
$U_q[\mathfrak{gl}(2)]$ is the $q$-deformed universal enveloping algebra of 
the Lie algebra $\mathfrak{gl}(2)$. This associative algebra over $\C$ is
generated by $\mathbf{L}_i^{\pm 1}$, $i=1,2$ and $\mathbf{S}^\pm$
with the relations \cite{Jimb86,Burd92}
\bea
\label{Uqglndef1}
& & \comm{\mathbf{L}_i}{\mathbf{L}_j} = 0  \\
\label{Uqglncomm2}
& & \mathbf{L}_i \mathbf{S}^\pm = q^{\pm (\delta_{i,2} - \delta_{i,1})} \mathbf{S}^\pm
 \mathbf{L}_i\\
\label{Uqglncomm3}
& & \comm{\mathbf{S}^+}{\mathbf{S}^-} = 
\frac{(\mathbf{L}_{2}\mathbf{L}_1^{-1})^2 - (\mathbf{L}_{2}\mathbf{L}_1^{-1})^{-2}}{q-q^{-1}}
\eea
Notice the replacement $q^2 \to q$ that we made in the definitions of  \cite{Burd92}.

It is convenient to work also with the subalgebra $U_q[\mathfrak{sl}(2)]$.
We introduce the generators $\mathbf{N}$ and $\mathbf{V}$
via $q^{-\mathbf{N}/2} = \mathbf{L}_1$, $q^{-\mathbf{V}/2} = \mathbf{L}_2$ and define
\bel{complement}
\mathbf{S}^z = \half(\mathbf{N} - \mathbf{V})
\ee
and the identity $I$.
Then the quantum algebra $U_q[\mathfrak{sl}(2)]$ is the subalgebra generated by 
$q^{\pm \mathbf{S}^z}$ and $\mathbf{S}^\pm$ with relations
\bea
\label{Uqslncomm1b}
& & q^{\mathbf{S}^z} q^{-\mathbf{S}^z} = q^{-\mathbf{S}^z} q^{\mathbf{S}^z} = I\\
\label{Uqslncomm2b}
& & q^{\mathbf{S}^z} \mathbf{S}^\pm q^{-\mathbf{S}^z} = q^{\pm 1} \mathbf{S}^\pm\\
\label{Uqslncomm3b}
& & \comm{\mathbf{S}^+}{\mathbf{S}^-} =  
\frac{q^{2\mathbf{S}^z} - q^{-2\mathbf{S}^z}}{q-q^{-1}}
\eea
Observing that 
$\mathbf{N} + \mathbf{V}$ belongs to the center of $U_q[\mathfrak{gl}(2)]$ \cite{Jimb86}
one sees that $U_q[\mathfrak{sl}(2)]$ is a subalgebra of $U_q[\mathfrak{gl}(2)]$.

It is trivial to verify that $U_q[\mathfrak{gl}(2)]$ has the two-dimensional fundamental
representation $\mathbf{S}^\pm \to \sigma^\pm$,  $\mathbf{N} \to \hat{n}$,
$\mathbf{V}  \to \hat{\upsilon}$ given by the matrices \eref{ladder}. 
Then $\sigma^\pm$ and $\sigma^z/2$ form the two-dimensional
fundamental representation of $U_q[\mathfrak{sl}(2)]$. 
Reducible higher-dimensional representations can be constructed using the coproduct
 \cite{Jimb86}
\bea
\label{coprod1}
\Delta(\mathbf{S}^\pm) & = & \mathbf{S}^\pm \otimes q^{-\mathbf{S}^z} + 
q^{\mathbf{S}^z} \otimes \mathbf{S}^\pm \\
\label{coprod2}
\Delta(\mathbf{S}^z) & = & \mathbf{S}^z \otimes \mathds{1} + \mathds{1} \otimes \mathbf{S}^z.
\eea
By repeatedly applying the coproduct to the fundamental representation
we obtain 
\bea
S^\pm(k) & = & q^{\half \sum_{j=1}^{k-1} \sigma^z_j - \half \sum_{j=k+1}^{L} \sigma^z_j} \sigma^\pm_k \\
S^z(k) & = & \half \sigma^z_k .
\eea
One has 
\bea
S^\pm(k) S^\pm(l) & = & \left\{ \ba{cl} 
q^{\pm 2} S^\pm(l) S^\pm(k) & \quad k>l \\ 
0 & \quad k=l \\ q^{\mp 2} S^\pm(l) S^\pm(k) & \quad k<l 
\ea \right.
\eea
Thus the spatial order in which particles are created (or annihilated)
by applying the operators $S^\pm(k)$ gives rise to
combinatorial issues when building many-particle configurations 
from the reference state corresponding to the empty lattice.

From the coproduct one obtains
the tensor representations of $U_q[\mathfrak{sl}(2)]$, denoted by capital letters,
\bel{repX}
S^\pm = \sum_{k=1}^L  S^\pm(k), \quad S^z = \sum_{k=1}^L  S^z(k).
\ee
For the full quantum algebra $U_q[\mathfrak{gl}(2)]$ the tensor generators are $S^\pm$
and
$\hat{N} = \sum_{k=1}^L \hat{n}_k$, 
$\hat{V} = \sum_{k=1}^L \hat{\upsilon}_k$.
The unit $I$ is represented by the $2^L$-dimensional unit matrix
$\mathbf{1} := \mathds{1}^{\otimes L}$.

For reflecting boundary conditions the Heisenberg Hamiltonian $\tilde{H}(q,1)$ 
is symmetric under the action of $U_q[\mathfrak{gl}(2)]$ \cite{Alca93,Pasq90}.
This symmetry property is the origin of the duality relations derived in \cite{Schu97}
and will also be used extensively below.
In fact, for $1\leq k \leq L-1$ one has
$\comm{h_{k,k+1}(q,1)}{S^\pm} = \comm{h_{k,k+1}(q,\alpha)}{S^z} = 0$,
which imply 
\bel{symmetryH1}
\comm{\tilde{H}(q,1)}{S^\pm} = 0 
\ee
and, equivalently to \eref{symgen3}, the diagonal symmetries
$\comm{\tilde{H}(q,\alpha)}{\hat{N}} = \comm{\tilde{H}(q,\alpha)}{\hat{V}} = 0$, 
thus giving rise to the $U_q[\mathfrak{gl}(2)]$ symmetry of $\tilde{H}(q,1)$.

We stress that $\comm{h_{L,1}(q,1)}{S^\pm} \neq 0$. One the other hand
$\comm{h_{L,1}(q,\alpha,\beta)}{S^z} = 0$.
Hence for periodic boundary conditions the symmetry 
breaks down to only a residual
$U(1)$ symmetry $\comm{H(q,\alpha,\beta)}{S^z} = 0$ generated by $S^z$,
which corresponds to particle number conservation since the $z$-component of the
total spin
$S^z$ is related to the particle number operator $\hat{N}$ through $S^z = L/2-\hat{N}$.

We also define 
\bel{Spmz}
S^\pm(q,\alpha) = \sum_{k=1}^L S_k^\pm(q,\alpha)
\ee
where
\bel{Sipm}
S_k^\pm(q,\alpha) = \alpha^{\pm \half (L+1-2k)} q^{\half \sum_{i=1}^{k-1} \sigma_i^z
- \half \sum_{i=k+1}^{L} \sigma_i^z} \sigma_k^\pm.
\ee
The diagonal transformation \eref{Vgsig} and the defining relation 
\eref{Uqslncomm2b} yield
\bea
\label{Sgauge}
V(\gamma) S^\pm(q,\alpha) V^{-1}(\gamma) & = & S^\pm \left(q,\alpha \gamma^{-1}\right) \\
\label{Snumber}
W(z) S^\pm(q,\alpha) W^{-1}(z)  & = & z^{\mp 1} S^\pm(q,\alpha).
\eea
Notice that $S^\pm (q,1) = S^\pm$ as defined in \eref{repX}.
Hence $S^\pm (q,\alpha)$ and $S^z$ also form a representation of $U_q[\mathfrak{sl}(2)]$.
Since according to \eref{symmetryH1} 
$\tilde{H}(q,1)$ commutes with the generators $S^\pm=S^\pm (q,1)$ we conclude from
\eref{Hgaugerefl} that
$\tilde{H}(q,\alpha)$ commutes with $S^\pm (q,\alpha)$, which together with $S^z$
form an equivalent representation of $U_q[\mathfrak{sl}(2)]$. In particular,
the generator of the ASEP with reflecting boundary conditions $\tilde{H}=\tilde{H}(q,q)$
commutes with $\tilde{S}^\pm := S^\pm (q,q)$.

We note that
\bea
\label{SpmT}
(S^\pm(q,\alpha))^T & = & S^\mp(q,\alpha^{-1}) \\
\label{reflrep}
\hat{R} S^\pm(q,\alpha) \hat{R}^{-1} & = & S^\pm(q^{-1},\alpha^{-1}) .
\eea
To prove the second equality one uses
$\hat{R} S_k^\pm(q,\alpha) \hat{R}^{-1} = S_{L+1-k}^\pm(q^{-1},\alpha^{-1})$
which comes from \eref{Def:Reflection}.

Finally we introduce the symmetric $q$-number
\bel{qnumber}
[x]_q := \frac{q^x - q^{-x}}{q-q^{-1}}
\ee
for $q,\,q^{-1} \neq 0$ and $x\in\C$. 
This definition can be applied straightforwardly to finite-dimensional matrices 
through the Taylor expansion of the exponential. 
For integers we also define the $q$-factorial
\bel{qfactorial}
[n]_q! := \left\{ \ba{ll} 1 & \quad n=0 \\
\prod_{k=1}^n [k]_q & \quad n \geq 1. \ea \right.
\ee

\subsection{Duality in the quantum Hamiltonian formalism}

For self-containedness we briefly review how to express expectation values
using the matrix representation of the generator which allows to 
state the notion of duality in a neat matrix form \cite{Sudb95,Giar09}.

A probability measure $P(\eta)$ is represented by the column vector
\bel{probvec}
\ket{P} = \sum_{\eta} P(\eta) \ket{\eta}.
\ee
Next we define the {\it summation vector}
\bel{sumvec}
\bra{s} := \sum_{\eta} \bra{\eta}
\ee 
which is the row vector where all components are equal to 1.
The expectation $\exval{f}_P$ of a function $f(\eta)$ 
with respect to a probability distribution $P(\eta)$
is the inner product
\bel{exval}
\exval{f}_P = \inprod{f}{P} = \bra{s} \hat{f} \ket{P}
\ee
where 
\bel{functionmatrix}
\hat{f} :=  \sum_{\eta} f(\eta)  \ket{\eta}  \bra{\eta}
\ee 
is a diagonal matrix
with diagonal elements   $f(\eta)$. 
Notice that
\bel{felement}
f(\eta) = \bra{\eta} \hat{f} \ket{\eta} = \bra{s}  \hat{f} \ket{\eta}.
\ee
One obtains the diagonal matrix $\hat{f}$ \eref{functionmatrix} corresponding to a
function $f(\eta)$ by substituting in
$f(\eta)$ the variable $\eta(k)$ by the diagonal matrix $\hat{n}_k$.

For a Markov process $\eta_t$ the master equation \eref{transgenerator} for
a probability measure $P(\eta_t) := \Prob{\eta_t=\eta)}$ reads
\bel{master}
\frac{\rmd}{\rmd t} \ket{P(t)} = - H \ket{P(t)}
\ee
which implies 
\bel{probvecMarkov}
\ket{P(t)} = \rme^{-Ht} \ket{P_0}
\ee
for an initial probability measure $P_0(\eta)\equiv P(\eta_0)$ at time $t=0$.
We write the expectation of a function $f(\eta_t)$  as 
\bel{exvalMarkov}
\exval{f(t)} := \sum_\eta f(\eta) P(\eta_t) 
= \sum_{\eta} f(\eta) \bra{\eta} \rme^{-Ht} \ket{P_0} = \bra{s} \hat{f} \rme^{-Ht} \ket{P_0}.
\ee
If the initial distribution needs to be specified we use an upper index 
$\exval{f(t)}^{P_0}$.
Normalization implies
$\inprod{s}{P(t)} = 1$ for all $t\geq 0$ and therefore $\bra{s} H = 0$.
A stationary distribution, denoted by $\ket{\pi^\ast}$,
is a right
eigenvector of $H$ with eigenvalue 0, i.e., $H \ket{\pi^\ast} = 0$ 
and normalization $\inprod{s}{\pi^\ast} = 1$.
For the ergodic subspaces with fixed particle number $N$ it is unique.

In order to introduce duality we consider a process $\xi_t$ with generator $H$ and 
a process
$x_t$ with generator $G$ which may have different countable state spaces 
$\Omega_A$ and $\Omega_B$. Consider also a family of functions $f^{x} : \Omega_A \mapsto \C$ 
indexed by $x \in \Omega_B$ and a family of functions $g^{\xi} : \Omega_B \mapsto \C$ 
indexed by $\xi \in \Omega_A$ such that
$f^x(\xi) = g^{\xi}(x) =: D(x,\xi)$. Let the process $\xi_t$ start at some fixed $\xi
\in \Omega_A$
 and let $x_t$ start at some fixed $x\in \Omega_B$. Then the two processes 
are said to be dual with respect to the duality function $D(x,\xi)$ if \cite{Ligg85}
\bel{duality}
\exval{f^{x}(t)}^\xi = \exval{g^\xi(t)}^x.
\ee
As pointed out in \cite{Giar09} this property can be stated neatly in terms 
of the generators as
\bel{duality2}
D H = G^T D
\ee
where the duality matrix $D$ is defined by
\bel{dualitymatrix}
D = \sum_{\xi\in\Omega_A} \sum_{x\in\Omega_B} D(x,\xi) \ket{x}\bra{\xi}
\ee
By construction one has $D(x,\xi) = \bra{x} D \ket{\xi}$.

\subsection{Shock/Antishock measures}

In vector notation a product measure with marginals $\rho_k$ is a tensor vector
$\ket{\{\rho_i\}} = |\rho_1) \otimes \dots \otimes |\rho_L)$ with the single-site column
vectors $|\rho_k) = (1-\rho_k, \rho_k)^T$. It is convenient to introduce the local
fugacity
\bel{fugacity}
z_k = \frac{\rho_k}{1-\rho_k}
\ee
and write the product measure in the form 
$\ket{\{z_i\}} = |z_1) \otimes \dots \otimes |z_L) / Z_L$
with $|z_k) = (1, z_k)^T$ and normalization $Z_L = \prod_{k=1}^L [z_k/(1+z_k)]$.

Specifically, we define 
for a set $\bfx$ of lattice 
sites with cardinality
$K=|\bfx|$ the following family 
$\ket{\nu_{\bfx}} =|z_1) \otimes \dots \otimes |z_L)/Z_L$ of 
shock/antishock measures,
or SAM for short, in terms of the fugacities
\bel{SAMI}
z_k = \left\{
\ba{ll} z q^{2 l} & \mbox{ for } x_l < k < x_{l+1}, \quad l \in \{0, 1, \dots , K+1\} \\
\infty & \mbox{ for }  k \in \bfx \ea \right.
\ee
with $x_0=0$ and $x_{K+1} = L+1$.
On coarse-grained scale with $\xi=k/L$ and $\xi_i^s=x_i/L$ the macroscopic density 
profile $\rho(\xi)$ corresponding to the fugacities $z_k$ has
discontinuities at $\xi=\xi^s_i$ with constant fugacity ratios $z_i^+/z_i^- = q^2$ 
where $z_i^\pm = z_{\xi^s_i\pm 1}$.
In forward (clockwise) direction and for $q>1$ this is an
upward step, corresponding to a shock profile 
for the ASEP (with positive bias $q>1$). Between site $L$ and site 1 there is
downward jump with fugacity ratio $q^{-2K}$.
On macroscopic scale this constitutes an
antishock 
at position $\xi^a = \xi^s - (1+\kappa)/2 \ \mbox{mod} \ 1$, hence the term SAM. 
These shock measures are closely
related to the shock measures defined in \cite{Beli02} and also to the 
infinite-volume shock measures
studied in \cite{Bala10} where the shock positions $x_i$ are occupied by second-class
particles.

With a different normalization factor the general SAM \eref{SAMI} with constant
fugacity jumps $q^2$ can be written as
\bel{SAMIalt}
\ket{\bar{\mu}_{\bfx}} 
:= \prod_{j=1}^K z^{-1}
q^{-\sum_{i=1}^{x_j-1} \hat{n}_i + \sum_{i=x_j+1}^{L} \hat{n}_i} \hat{n}_{x_j}  
\ket{z} \propto 
\ket{\nu_{\bfx}}.
\ee
Here 
\be 
\ket{z} := |z)^{\otimes L}
\ee
for $K=0$ is the {\it unnormalized} 
homogeneous product measure
corresponding to the Bernoulli product measure $\ket{\rho} = \ket{z}/(1+z)^L$ where
$\rho=z/(1+z)$. 

From the SAM defined by 
\eref{SAMIalt} we construct a second type of SAM'S using the transformations
\eref{Vg} and \eref{numb} 
\bea
\label{SAMII}
\ket{\mu_{\bfx}} & := & z^{-K} V(q^{\frac{2K}{L}}) \prod_{j=1}^K W(q^{\frac{2x_j-L-1}{L}})  \ket{\bar{\mu}_{\bfx}} \\
& = &
\label{SAMIIb}
\prod_{j=1}^K \left[
z^{-1} q^{\frac{2}{L}\sum_{l=1}^L (x_j-l)\hat{n}_l
- \sum_{i=1}^{x_j-1} \hat{n}_i +
\sum_{i=x_j+1}^{L} \hat{n}_i} \hat{n}_{x_j} \right] \ket{z}.
\eea
We illustrate the definition for $K=1$ and $K=2$.

For $K=1$ the SAM \eref{SAMII} reduces to
\bel{SAM1}
\ket{\mu_{x}} := 
z^{-1} q^{\frac{2}{L} \sum_{l=1}^L (x-l)\hat{n}_l
- \sum_{i=1}^{x-1} \hat{n}_i + \sum_{i=x+1}^{L} \hat{n}_i} \hat{n}_{x} \ket{z}
\ee
with $1 \leq x \leq L$.
This corresponds to local fugacities
\be 
z_k = \left\{ \ba{ll} zq^{-\frac{2(k-x)+L}{L}} & \mbox{ for } 1 \leq k < x \\ \infty  & \mbox{ for } k = x \\
z q^{-\frac{2(k-x)-L}{L}} & \mbox{ for } x < k \leq L \ea \right.
\ee
and therefore to densities
\be 
\rho_k = \left\{ \ba{ll} 
\frac{1}{2} \left[ 1 - \tanh{\left(\frac{E}{L}(k-x+L\frac{\kappa+1}{2})\right)} \right]
 & \mbox{ for } k < x \\ 1  & \mbox{ for } k = x \\
\frac{1}{2} \left[ 1 - \tanh{\left(\frac{E}{L}(k-x+L\frac{\kappa-1}{2})\right)} \right]
 & \mbox{ for } k > x \ea \right.
\ee
where $E= \ln{q}$ and $\kappa =  \ln{z}/E$
corresponding to $z=q^{\kappa}$. These measures are closely
related to the type-II shock measures defined in \cite{Beli13}. 
On coarse-grained scale with $\xi=k/L$ and $\xi^s=x/L$ the macroscopic density 
profile $\rho(\xi)$ has a
discontinuity at $\xi=\xi^s$ with amplitude
$A^s := \rho^+ - \rho_- = \tanh{(E(\kappa+1)/2)} - \tanh{(E(\kappa+1)/2)}$, 
where $\rho^\pm = \lim_{\epsilon \to 0} \rho(\xi^s \pm \epsilon)$.
In forward (clockwise) direction and for $E>0$ this is an
upward step with fugacity ratio $z^+/z^- = q^2$, corresponding to a shock profile 
for the ASEP (with positive bias $E>0$).
For strong asymmetry $E = \epsilon L$ one has
near $k=x-L(1+\kappa)/2 \ \mbox{mod} \ L$ a smoothened downward ``step'' with an 
intrinsic width 
$\propto 1/\epsilon$ on lattice scale. On macroscopic scale this constitutes an
antishock 
at position $\xi^a = \xi^s - (1+\kappa)/2 \ \mbox{mod} \ 1$. 

For $K=2$ the SAM $\ket{\mu_{x,y}}$ \eref{SAMII} with $1 \leq x < y \leq L$ has 
local fugacities
\be 
z_k = \left\{ \ba{ll} 
z q^{-\frac{2(2k-x-y+L)}{L}} & \mbox{ for } 1 \leq k < x \\
\infty & \mbox{ for }  k = x \\
z q^{-\frac{2(2k-x-y)}{L}} & \mbox{ for } x < k < y \\
\infty & \mbox{ for }  k = y \\
z q^{-\frac{2(2k-x-y-L)}{L}} & \mbox{ for } y < k  \leq L
\ea \right.
\ee
corresponding to densities
\be 
\rho_k = \left\{ \ba{ll} 
\frac{1}{2} \left[ 1 - \tanh{\left(\frac{E}{L}(2k-x-y+L\frac{\kappa+2}{2})\right)} \right]
& \mbox{ for } 1 \leq k < x \\
1 & \mbox{ for }  k = x \\
\frac{1}{2} \left[ 1 - \tanh{\left(\frac{E}{L}(2k-x-y+L\frac{\kappa}{2})\right)} \right]
& \mbox{ for } x < k < y \\
1 & \mbox{ for }  k = y \\
\frac{1}{2} \left[ 1 - \tanh{\left(\frac{E}{L}(2k-x-y+L\frac{\kappa-2}{2})\right)} \right]
& \mbox{ for } y < k  \leq L
\ea \right.
\ee
On macroscopic scale this density profile has two shock discontinuities at 
$\xi^s_{1} = x/L \ \mbox{mod} \ 1$ and $\xi^s_{2} = y/L \ \mbox{mod} \ 1$. Both fugacity
ratios are of magnitude $q^2$. For strong asymmetry $E = \epsilon L$ there
are two antishocks at $\xi^a_1 = (\xi^s_{1} + \xi^s_{2})/2 - (\kappa + 2)/4  \ \mbox{mod} \ 1$
and $\xi^a_2 = (\xi^s_{1} + \xi^s_{2})/2 - (\kappa + 2)/4 \ \mbox{mod} \ 1$.

\section{Results}

Before stating the new results we recall the duality relation for the ASEP with 
reflecting boundary conditions derived in
\cite{Schu97}, Eq. (3.12). We reformulate this duality relation slightly and correct
a sign error in Eq. (3.12) of \cite{Schu97}. We also give a new proof,
parts of which are 
then used to prove the new results given below. We also present a generalized and slightly reformulated version of 
the intertwiner relation Eq. (2.62b) of \cite{Pasq90} for perdiodic  boundary conditions,
also with a correction of some sign errors in that formula.

\begin{theorem} (Sch\"utz, \cite{Schu97})
\label{theo1}
The ASEP with reflecting boundary conditions and asymmetry parameter 
$q$ is self-dual w.r.t. the duality function
\bel{DualityfunctionASEP}
D(\bfx,\eta) =  \prod_{j=1}^{|\bfx|}  q^{ -2 x_j} Q_{x_j} (\eta)
\ee
where
\bel{Q}
Q_{x_j} (\eta) = q^{\sum_{i=1}^{x_j-1} \eta(i) - \sum_{i=x_j+1}^{L} \eta(i) } \eta(x_j).
\ee
\end{theorem}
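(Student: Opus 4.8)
The plan is to recast the self-duality assertion in the operator form \eref{duality2}, i.e.\ to prove the single matrix identity $D\tilde{H} = \tilde{H}^T D$, where $\tilde{H}=\tilde{H}(q,q)$ is the reflecting-boundary generator and $D=\sum_{\bfx,\eta}D(\bfx,\eta)\ket{\bfx}\bra{\eta}$ is the duality matrix built from \eref{DualityfunctionASEP}. The engine of the proof is the elementary fact behind all symmetry-based dualities: if $S$ is any operator commuting with $\tilde{H}$ and $\hat{\pi}$ is the diagonal reversible-measure matrix obeying the reversibility relation \eref{revASEPrefl}, here $\hat{\pi}^{-1}\tilde{H}\hat{\pi}=\tilde{H}^T$ with $\hat{\pi}=V^{-1}(q^2)$, then $D:=\hat{\pi}^{-1}S$ automatically satisfies the wanted identity, since $D\tilde{H}=\hat{\pi}^{-1}S\tilde{H}=\hat{\pi}^{-1}\tilde{H}S=(\hat{\pi}^{-1}\tilde{H}\hat{\pi})\hat{\pi}^{-1}S=\tilde{H}^T D$. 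Both inputs are already in place above: the reversibility relation, and the $U_q[\mathfrak{sl}(2)]$ symmetry $[\tilde{H},\tilde{S}^\pm]=0$ with $\tilde{S}^\pm=S^\pm(q,q)$, which follows from \eref{symmetryH1} and \eref{Hgaugerefl}.

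The real content is therefore to show that $S:=\hat{\pi}D$ is such a symmetry, that is, a function of operators commuting with $\tilde{H}$. I would read off its matrix elements directly from \eref{DualityfunctionASEP} and \eref{Q}. Because $\hat{\pi}=V^{-1}(q^2)$ is diagonal with weight $\prod_{x_j\in\bfx}q^{2x_j-L-1}$, the prefactors $q^{-2x_j}$ in \eref{DualityfunctionASEP} cancel against $\hat{\pi}$ and leave
\begin{equation}
\bra{\bfx}S\ket{\eta}=\prod_{j=1}^{|\bfx|}q^{-L-1}\,q^{\sum_{i<x_j}\eta(i)-\sum_{i>x_j}\eta(i)}\,\eta(x_j);
\end{equation}
this exact cancellation is the reason the reversible measure \eref{ASEPrevmeas} with $\mu=-L-1$ is the correct one. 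Comparing this with the action of the generator $\tilde{S}^+=S^+(q,q)$, which by its coproduct origin removes a particle together with precisely the nonlocal $q$-weights $q^{-\sum_{i<k}\hat{n}_i+\sum_{i>k}\hat{n}_i}$ appearing here, I expect to establish the operator identity
\begin{equation}
\hat{\pi}D=\exp_q\!\big(q^{L+1}\tilde{S}^+\big)\,q^{-(L+1)\hat{N}},\qquad \exp_q(x):=\sum_{m\geq0}\frac{x^m}{[m]_q!},
\end{equation}
with the $q$-factorials \eref{qfactorial}. Since $[\tilde{H},\tilde{S}^+]=0$ and $[\tilde{H},\hat{N}]=0$ by particle-number conservation, any function of $\tilde{S}^+$ and $\hat{N}$ commutes with $\tilde{H}$; hence $S=\hat{\pi}D$ is a symmetry and the short computation of the first paragraph finishes the proof.

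The main obstacle is precisely this operator identity: matching the product-over-dual-particles structure of $D(\bfx,\eta)$ to the $q$-deformed multinomial expansion of $(\tilde{S}^+)^m$. The delicate bookkeeping is the $q^{\pm2}$ reordering factors produced by $S^+(k)S^+(l)$; one must check that the telescoping exponents of \eref{Q}, evaluated after the successive removals encoded in $(\tilde{S}^+)^m$, reproduce the uniform coefficient $q^{(L+1)m}/[m]_q!$ in every particle-number sector. I would first settle the single dual particle $|\bfx|=1$, where $\hat{\pi}D$ restricted to the $N$-particle sector is simply $q^{-L-1}(\tilde{S}^+)^{N-1}/[N-1]_q!$ (a short direct calculation), and then run an induction on $|\bfx|$ by peeling off the rightmost dual particle.

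Finally, should the global identity prove unwieldy to control in closed form, an equivalent and fully elementary substitute is to verify $D\tilde{H}=\tilde{H}^T D$ bond by bond. Since $\tilde{H}=\sum_{k=1}^{L-1}h_{k,k+1}(q,q)$ couples only nearest neighbours and $D(\bfx,\eta)$ factorises over the dual particles, the identity collapses to a two-site intertwining that has to be checked only on the finitely many local occupations of a bond $(k,k+1)$ together with its adjacent dual content. This is the robust version of the original computation of \cite{Schu97}, now reorganised so that the $U_q[\mathfrak{sl}(2)]$ symmetry explains transparently why it holds.
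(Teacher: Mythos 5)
Your proposal is correct and follows essentially the same route as the paper: both factorize the duality matrix as $D=\hat{\pi}^{-1}S$ with $S$ the $q$-exponential $\sum_n(\tilde{S}^+)^n/[n]_q!$ of the $U_q[\mathfrak{sl}(2)]$ raising operator (your operator identity for $\hat{\pi}D$ is exactly the content of the paper's Lemma~\ref{Lem1}, restated with the $\mu=-L-1$ normalization of $\hat{\pi}$), and then combine the symmetry $[\tilde{H},\tilde{S}^+]=0$ with the reversibility relation \eref{revASEPrefl}. The only difference is packaging — you work with the matrix identity $D\tilde{H}=\tilde{H}^TD$ while the paper unwinds the same steps as a chain of equalities between expectations — and, like the paper (which cites \cite{Beli15b} for Lemma~\ref{Lem1}), you defer the combinatorial verification of the key operator identity.
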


\begin{remark}
Because of particle number conservation also 
\be 
\tilde{D}(\bfx,\eta) =
q^{|\bfx|(N(\eta)-1)}D(\bfx,\eta) = \prod_{j=1}^{|\bfx|} q^{2N_{x_j}(\eta)-2 x_j}  \eta(x_j)
\ee
is a duality function with the particle numbers $N(\eta)$ \eref{partnum}
and $N_{x}(\eta)$ \eref{NyMx}. This is the duality function (3.12)
of \cite{Schu97}.\footnote{Notice
a sign error in front of the term $2k_i$ in Eq. (3.12) of \cite{Schu97} and pay 
attention to the
different convention $q \leftrightarrow q^{-1}$.}
\end{remark}

\begin{proposition} 
\label{Prop1} 
Let $H(\cdot,\cdot,\cdot)$ be the conditioned generator \eref{ASEPgenper}
of the ASEP with periodic boundary conditions 
and let $\eta_K \in \Omega_K$ be
any configuration with $K$ particles. Then for $0\leq n \leq L-K$
one has the intertwining relation
\be
\label{sym}
\left[(S^\pm(q,\alpha))^n H(q,\alpha,q^{2n}\beta^\pm) -
H(q,\alpha,\beta^\pm) (S^\pm(q,\alpha))^n \right] \ket{\eta_{K}} = 0
\ee
with
\bel{symN0}
\beta^\pm  = q^{\pm (L-2K)} \alpha^{-L}
\ee
and the generators $S^\pm(q,\alpha)$ \eref{Spmz} of $U_q[\mathfrak{gl}(2)]$.
\end{proposition}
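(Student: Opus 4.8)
The plan is to strip the statement down to the single boundary bond $h_{L,1}$ and to the undeformed generators $S^\pm=S^\pm(q,1)$, and then to track the twist that is produced when $S^\pm$ is commuted across that bond. First I would gauge away $\alpha$. Conjugating \eref{sym} by the diagonal operator $V(\alpha)$ and invoking \eref{Hgaugeper} and \eref{Sgauge} replaces $H(q,\alpha,\beta)$ by $H(q,1,\beta\alpha^L)$ and $S^\pm(q,\alpha)$ by $S^\pm(q,1)=S^\pm$. With $\beta^\pm$ as in \eref{symN0} one gets $\beta^\pm\alpha^L=q^{\pm(L-2K)}$ and $q^{2n}\beta^\pm\alpha^L=q^{2n}q^{\pm(L-2K)}$, both free of $\alpha$; since $V(\alpha)$ is diagonal, $V(\alpha)\ket{\eta_K}\propto\ket{\eta_K}$, so writing $b^\pm:=q^{\pm(L-2K)}$ it suffices to prove
\be
\left[(S^\pm)^n H(q,1,q^{2n}b^\pm)-H(q,1,b^\pm)(S^\pm)^n\right]\ket{\eta_K}=0 .
\ee
Because $\tilde{H}(q,1)$ commutes with $S^\pm$ by \eref{symmetryH1}, the bulk part of $H=\tilde{H}(q,1)+h_{L,1}$ intertwines trivially and the whole statement collapses to the corresponding identity for the single bond $h_{L,1}(q,1,\cdot)$ of \eref{hoppingbound}.

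Next I would isolate the twist shift. Splitting $S^\pm=\sum_{k=1}^{L}S^\pm_k$, the interior terms $2\le k\le L-1$ move across the bond cleanly: the only part of the $q$-string in $S^\pm_k$ reaching the boundary sites is $q^{\half(\sigma^z_1-\sigma^z_L)}$, and conjugation by this factor rescales the forward and backward hopping terms of $h_{L,1}(q,1,\beta)$ by $q^{-2}$ and $q^{+2}$ respectively while leaving the $\beta$-independent diagonal terms fixed, which is exactly the substitution $\beta\mapsto q^{-2}\beta$ (independently of the sign of $S^\pm_k$). Hence $S^\pm_k\,h_{L,1}(q,1,\beta)=h_{L,1}(q,1,q^{-2}\beta)\,S^\pm_k$ for every interior $k$. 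This produces the uniform $q^{-2}$ shift per factor of $S^\pm$ and explains the exponent $q^{2n}$ in the statement: absent any boundary-site contribution, moving $(S^\pm)^n$ across $h_{L,1}(q,1,q^{2n}b^\pm)$ would yield precisely $h_{L,1}(q,1,b^\pm)(S^\pm)^n$.

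The real content is the edge defect $X^\pm(\beta):=S^\pm h_{L,1}(q,1,\beta)-h_{L,1}(q,1,q^{-2}\beta)S^\pm$, which by the previous step is supported on the bond $(L,1)$ and receives contributions only from $S^\pm_1$ and $S^\pm_L$. The key lemma I would establish is
\be
X^\pm(q^2 b^\pm)\,\ket{\eta_K}=0,\qquad b^\pm=q^{\pm(L-2K)},
\ee
by a direct computation on the four-dimensional space of the bond. Writing the interior parts of the $q$-strings as the scalar $q^{\mp S^z}$ times boundary factors, the particle number $K$ enters only through $S^z\ket{\eta_K}=(\half L-K)\ket{\eta_K}$; for each boundary occupation $(\eta(L),\eta(1))\in\{(1,0),(0,1),(1,1)\}$ one checks that the contributions of $S^\pm_1$ and $S^\pm_L$ cancel exactly when the twist equals $q^2 b^\pm=q^{\pm(L-2K)+2}$. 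This is the tuned value of \eref{symN0}, and it is the step where both the string structure of $S^\pm$ and the restriction to a fixed-$N$ sector are indispensable.

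Finally I would pass from $n=1$ to general $n$ through the telescoping identity
\be
(S^\pm)^n h_{L,1}(q,1,\beta)-h_{L,1}(q,1,q^{-2n}\beta)(S^\pm)^n=\sum_{j=0}^{n-1}(S^\pm)^{n-1-j}\,X^\pm(q^{-2j}\beta)\,(S^\pm)^j ,
\ee
taken at $\beta=q^{2n}b^\pm$ and applied to $\ket{\eta_K}$. Here lies the main obstacle: the individual summands do \emph{not} vanish for $n>1$, because $(S^\pm)^j$ carries $\ket{\eta_K}$ into the sector $\Omega_{K\mp j}$, whose tuned twist obeys the bookkeeping $q^2 b^\pm_M=b^\pm_{M\mp1}$ and hence no longer matches the argument $q^{-2j}\beta$ that the telescope feeds into $X^\pm$. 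One cannot therefore conclude term by term. I would instead prove that the full sum vanishes on $\ket{\eta_K}$ by induction on $n$, inserting the explicit form of $X^\pm$ from the lemma and using the commutation $q^{S^z}S^\pm=q^{\pm1}S^\pm q^{S^z}$ of \eref{Uqslncomm2b} to follow how the $q^{\pm S^z}$-weights of successive defects combine and cancel. Carrying out this cancellation cleanly—rather than the short boundary computation behind the $n=1$ lemma—is where I expect the bulk of the effort to lie.
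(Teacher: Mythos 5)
Your proposal follows essentially the same route as the paper's appendix: gauge away $\alpha$ with $V$, use the bulk $U_q[\mathfrak{sl}(2)]$ symmetry to reduce everything to the boundary bond $h_{L,1}$, compute the single-bond defect explicitly (the paper's Eqs.~\eref{pseudocommp}--\eref{pseudocomm3}, whose factors $[1-\beta^{\pm1}\alpha^{\pm L}q^{-2S^z\mp2}]$ vanish precisely on the tuned particle-number sector \eref{symN0}), and then iterate in $n$. Your diagnosis that the telescoped sum does not vanish term by term for $n>1$ and requires a genuine inductive cancellation is correct, and it is exactly the part the paper itself only sketches: it works out $n=2$ explicitly in Eq.~\eref{pseudocommm2} --- showing that the iterated defect again collects into a single factor such as $[1-\beta\alpha^{L}q^{2S^z-4}]$ annihilating the tuned sector --- and defers the general $n$ to Pasquier--Saleur.
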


\begin{remark}
Defining the duality matrix
$D_{K}^{K\pm n} = \mathbf{1}_{K\mp n} (S^\pm(q,\alpha))^n \mathbf{1}_{K}$
with the projector \eref{projectorN} and using \eref{HTper}
the intertwiner relation \eref{sym} can be expressed as the duality relation
\bel{dualityrelation}
D_{K}^{K\mp n} H_K(q,\alpha,q^{2n}\beta^\pm) = 
\left(H_{K\mp n} (q,\alpha^{-1},\beta^\mp)\right)^T D_{K}^{K\mp n}
\ee
with the projected generator \eref{generatorN}. We shall focus on the formulation
\eref{sym} of this duality.
\end{remark}

\begin{remark} For $\alpha=1$ this is the result (2.62b) of \cite{Pasq90}.\footnote{Eqs. 
(2.62a) and (2.62b) of \cite{Pasq90} have some sign errors which are corrected in 
Proposition \eref{Prop1}.} The proof of Proposition \eref{Prop1} is entirely analogous
to the derivation given in \cite{Pasq90} since the generalized form \eref{sym} follows 
trivially from the result of \cite{Pasq90} for $\alpha=1$ through the similarity transformation \eref{Vg}. 
Some ingredients of the proof, with sign errors in \cite{Pasq90} corrected, 
are presented in the appendix. 
\end{remark}

We focus now on global conditioning ($\alpha\neq q, \ \beta=1$) and local conditioning $\alpha=q, \ \beta \neq 1$. The main results of this work are the following theorems.

\begin{theorem} 
\label{theo2}
Let $H^K_N := \mathbf{1}_N H(q,q^{1-\frac{2K}{L}},1)\mathbf{1}_N$  
be the generator \eref{generatorN} of the globally conditioned ASEP with $N$ particles and
periodic boundary conditions and driving strength $s = -2K/L \ln{q}$. Furthermore, let 
$\ket{\mu^N_{\bfx}} = \mathbf{1}_N \ket{\mu_{\bfx}}$ be the unnormalized shock-antishock measure \eref{SAMII} 
restricted to $N$ particles and 
\be 
\ket{\mu^N_{\bfx}(t)} := \rme^{-H^K_N t} \ket{\mu^N_{\bfx}}
\ee
 with $K=|\bfx|$. Then
\bel{shocks}
\ket{\mu^N_{\bfx}(t)} = \sum_{\bfy\in\Omega_K} P^N(\bfy,t|\bfx,0) \\
\ket{\mu^N_{\bfy}}
\ee
where $P^N(\bfy,t|\bfx,0) := \bra{\bfy} \rme^{-H^N_K t} \ket{\bfx}$ is the
conditioned $K$-particle transition probability from $\bfx$ to $\bfy$ at time $t$
with driving strength $s' = -2N/L \ln{q}$.
\end{theorem}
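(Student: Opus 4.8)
The plan is to exhibit the restricted shock--antishock measure $\ket{\mu^N_{\bfx}}$ as the image of the $K$-particle basis vector $\ket{\bfx}$ under the $U_q[\mathfrak{gl}(2)]$ intertwiner of Proposition~\ref{Prop1}, and then to let the intertwining relation \eref{sym} carry the time evolution. Everything rests on one bridging identity together with the gauge and reflection relations already collected above. The identity is that on the homogeneous product vector $\ket{z}$ the single-site creation operator and the occupation projector coincide up to a fugacity, $\sigma^-_x\ket{z}=z^{-1}\hat{n}_x\ket{z}$: adding a particle at $x$ to every configuration with $\eta(x)=0$ reproduces, after reindexing, the projection onto $\eta(x)=1$ with one extra factor $z^{-1}$. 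Dressing this with the diagonal $q$-string in \eref{Sipm} promotes it to $S^\pm_x(q,\alpha)\ket{z}\propto$ (the single-site shock operator in \eref{SAMIalt}) applied to $\ket{z}$, with an explicit position-dependent scalar. Iterating over the ordered sites of $\bfx$ and tracking the $q$-commutation factors of the $S^\pm(k)$, I expect the key lemma
\be
\ket{\mu^N_{\bfx}} = c_{\bfx}\,\mathbf{1}_N\,(S^-(q,\alpha))^{N-K}\,\ket{\bfx}, \qquad \alpha = q^{1-\frac{2K}{L}},
\ee
where $\ket{\bfx}$ is the $K$-particle configuration occupying exactly the sites $\bfx$ and $c_{\bfx}$ a scalar. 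The gauge factors $V(q^{2K/L})$ and $\prod_j W(q^{(2x_j-L-1)/L})$ built into \eref{SAMII} are precisely the ones that \eref{Sgauge} and \eref{Snumber} attach to $S^-(q,\alpha)$, so the definition \eref{SAMII} is tailored to this identification.

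Writing $\mathcal{S}:=\mathbf{1}_N (S^-(q,\alpha))^{N-K}\mathbf{1}_K$ with $n=N-K$, projection of \eref{sym} (lower sign) onto the $K$- and $N$-particle sectors gives the operator intertwining $\mathcal{S}\,H_K(q,\alpha,q^{2n}\beta^-)=H_N(q,\alpha,\beta^-)\,\mathcal{S}$ with $\beta^-=q^{-(L-2K)}\alpha^{-L}$. Exponentiating, acting on $\ket{\bfx}$, and inserting $\mathbf{1}_K=\sum_{\bfy}\ket{\bfy}\bra{\bfy}$ on the $K$-particle side turns $\rme^{-H_N(q,\alpha,\beta^-)t}\ket{\mu^N_{\bfx}}$ into $\sum_{\bfy}\bra{\bfy}\rme^{-H_K(q,\alpha,q^{2n}\beta^-)t}\ket{\bfx}\,\ket{\mu^N_{\bfy}}$, up to the ratios $c_{\bfx}/c_{\bfy}$, which is already of the asserted form.

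It then remains to match parameters. With $\alpha=q^{1-2K/L}$ one finds $\beta^-=q^{4K-2L}\neq1$, so $H_N(q,\alpha,\beta^-)$ becomes the physical generator only after the twist is removed: the gauge \eref{Hgaugeper} with $\gamma=q^{2-4K/L}$ sends $\alpha\mapsto\alpha^{-1}$, and the reflection \eref{symgen1} restores it, so that $\hat{R}V(\gamma)H_N(q,\alpha,\beta^-)V^{-1}(\gamma)\hat{R}^{-1}=H^K_N$. The same conjugation takes the $K$-particle generator to $H_K(q,q^{1-2K/L},q^{2K-2N})$, which a further diagonal gauge $V(q^{2(N-K)/L})$ turns into the stochastic dual $H^N_K=H_K(q,q^{1-2N/L},1)$, since $\beta\gamma^L=q^{2K-2N}q^{2(N-K)}=1$. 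Because $V$, $W$ and $\hat{R}$ are diagonal or permutation operators, their only effect on $\bra{\bfy}\rme^{-H^N_K t}\ket{\bfx}$ is a scalar $v_{\bfy}/v_{\bfx}$, which I absorb into the $c_{\bfx}$; what survives are exactly the conditioned $K$-particle transition probabilities $P^N(\bfy,t|\bfx,0)$ with driving strength $s'=-2N/L\ln q$.

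I expect the bridging lemma to be the main obstacle: proving the displayed equality with the correct scalar forces control of the $q$-commutation combinatorics of the non-commuting $S^-_k(q,\alpha)$ while reconciling their creation-operator action with the projector/fugacity form \eref{SAMIIb} and matching the $V,W$ dressing. The secondary difficulty is the parameter bookkeeping of the previous paragraph: following the twist through the reflection and the two successive gauge transformations, and verifying that the residual scalars cancel against the SAM normalizations, so that the dual process is genuinely the stochastic $H^N_K$ rather than a twisted variant.
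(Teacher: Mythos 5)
Your overall architecture is the same as the paper's: express the restricted SAM as the image of $\ket{\bfx}$ under a power of the deformed lowering operator, feed this into the intertwining relation \eref{sym} with $n=N-K$, repair the parameters by gauge and reflection conjugations, exponentiate, and insert the $K$-particle resolution of the identity. Your parameter bookkeeping in the last step is in fact correct (with $\alpha=q^{1-2K/L}$ one gets $\beta^-=q^{4K-2L}$, and the conjugation by $\hat{R}V(q^{2-4K/L})$ followed by $V(q^{2(N-K)/L})$ on the $K$-particle side does produce $H^K_N$ and $H^N_K$); the paper takes the slightly cleaner route of starting from $\alpha=q^{2K/L-1}$, for which $\beta^-=1$ from the outset, so that only one gauge $\gamma^L=q^{2K-2N}$ plus one reflection is needed.

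The genuine gap is the bridging identity, which is exactly the paper's Lemma \ref{Lem3} (eq.~\eref{SAMUqII}) and carries essentially all of the technical weight. First, as literally stated your identity is not the one you need: after the $\hat{R}V(\gamma)$ conjugation the operator standing next to $H^K_N$ is $(S^-(q^{-1},q^{1-2K/L}))^{N-K}$, not $(S^-(q,q^{1-2K/L}))^{N-K}$, since \eref{reflrep} flips the first argument; correspondingly the identity that reproduces the fugacity profile \eref{SAMII} is the one with first argument $q^{-1}$ and an explicit $V(q^{2(K-N)/L})$ dressing (which is configuration-dependent on the $N$-particle sector, not a scalar $c_{\bfx}$ — it only becomes a scalar after being commuted through $S^-$ onto $\ket{\bfx}$). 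Your version with first argument $q$ generates the space-reflected measure, and making your plan close requires tracking how $\hat{R}$ acts on the measures and relabels the shock positions, which you do not do. Second, the heuristic you offer ($\sigma^-_x\ket{z}=z^{-1}\hat{n}_x\ket{z}$, dressed and iterated over the sites of $\bfx$) proves a different statement: it relates a product of $K$ dressed creation operators at the shock positions acting on $\ket{z}$ to the product of shock projectors acting on $\ket{z}$. What is needed is an identity for $(S^-)^{N-K}\ket{\bfx}$, where the $N-K$ created particles are summed over all positions and their $q$-strings interlace both with the $K$ fixed particles and with each other; this is the combinatorial content of Lemma \ref{Lem1} (eq.~\eref{aux1}, imported from \cite{Beli15b} and transposed via \eref{SpmT} to give \eref{p1}), and it is not reached by the single-site computation. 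Until that identity is established with its exact normalization, the claimed cancellation of the residual scalars $c_{\bfx}/c_{\bfy}$ against the $V$-factors cannot be verified, so the proof is incomplete at its central step.
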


\begin{remark}
The significance of this result lies in the fact that the conditioned evolution of an
$N$-particle SAM is fully determined by the conditioned transition probability of only 
$K$ particles, in analogy to the evolution of shocks in the infinite lattice
explored in \cite{Beli02,Bala10}. 
\end{remark}

\begin{remark}
For $K=1$ a related result was obtained in \cite{Beli13} for a normalized and
slightly different definition of the shock measures. The proof of \cite{Beli13} is by explicit computations relying on the presence of a single shock.
The present proof for the generalized $K\geq 1$ shows that the mathematical origin
of the conditioned shock motion is the duality relation  \eref{sym}.
\end{remark}

\begin{theorem} 
\label{theo3}
Let $\bar{H}^K_N := \mathbf{1}_N H(q,q,q^{-2K})\mathbf{1}_N$  
be the generator \eref{generatorN} of the locally conditioned ASEP with $N$ particles and
periodic boundary conditions and boundary driving strength $\bar{s} = -2K \ln{q}$. Furthermore, let 
$\ket{\bar{\mu}^{N}_{\bfx}}= \mathbf{1}_N \ket{\bar{\mu}_{\bfx}}$ be the unnormalized shock-antishock measure \eref{SAMI}
restricted to $N$ particles and 
\be 
\ket{\bar{\mu}^{N}_{\bfx}(t)} := \rme^{-\bar{H}^K_N t} \ket{\bar{\mu}^{N}_{\bfx}}
\ee
with $K=|\bfx|$. Then
\bel{shocks2}
\ket{\bar{\mu}^{N}_{\bfx}(t)} = \sum_{\bfy\in\Omega_K} \bar{P}^N(\bfy,t|\bfx,0) \\
\ket{\bar{\mu}^{N}_{\bfy}}
\ee
where $\bar{P}^N(\bfy,t|\bfx,0) := \bra{\bfy} \rme^{-\bar{H}^N_K t} \ket{\bfx}$ is the
boundary-conditioned $K$-particle transition probability from $\bfx$ to $\bfy$ at time $t$
with driving strength $\bar{s}' = -2N \ln{q}$.
\end{theorem}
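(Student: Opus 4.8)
The plan is to deduce Theorem~\ref{theo3} from Theorem~\ref{theo2} by the diagonal gauge transformation $V(q^{-2K/L})$, exploiting that locally and globally conditioned generators are similar. From \eref{Hgaugeper} with $\gamma=q^{-2K/L}$ one has, on $\Omega_N$, $\bar{H}^K_N = V(q^{-2K/L})\,H^K_N\,V^{-1}(q^{-2K/L})$, since $V(q^{-2K/L})$ sends $(q^{1-2K/L},1)\mapsto(q,q^{-2K})$; likewise, with $\gamma=q^{-2N/L}$, $\bar{H}^N_K = V(q^{-2N/L})\,H^N_K\,V^{-1}(q^{-2N/L})$ on $\Omega_K$. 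Thus both processes in Theorem~\ref{theo3} are diagonal conjugates of the corresponding processes in Theorem~\ref{theo2}, and the statement should follow by transporting Theorem~\ref{theo2} through these conjugations, provided the two families of shock/antishock measures are matched.

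The first step is to rewrite the SAM. Inverting \eref{SAMII} expresses $\ket{\bar{\mu}_{\bfx}}$ in terms of $\ket{\mu_{\bfx}}$ via $V(q^{-2K/L})$ and a product of $W$-factors. Restricting by $\mathbf{1}_N$ and using that $V(\gamma)$ and $W(z)$ are diagonal, commute with $\mathbf{1}_N$, and that each $W(q^{(2x_j-L-1)/L})$ acts on the $N$-particle sector as a scalar, I expect to obtain $\ket{\bar{\mu}^N_{\bfx}} = c_{\bfx}\,V(q^{-2K/L})\,\ket{\mu^N_{\bfx}}$ with the explicit scalar $c_{\bfx}=z^{K}\prod_{j=1}^{K}q^{-N(2x_j-L-1)/L}$.

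Assembling these, $\rme^{-\bar{H}^K_N t}=V(q^{-2K/L})\,\rme^{-H^K_N t}\,V^{-1}(q^{-2K/L})$ together with Theorem~\ref{theo2} gives
\[
\rme^{-\bar{H}^K_N t}\ket{\bar{\mu}^N_{\bfx}} = c_{\bfx}\,V(q^{-2K/L})\sum_{\bfy}P^N(\bfy,t|\bfx,0)\ket{\mu^N_{\bfy}} = \sum_{\bfy}\frac{c_{\bfx}}{c_{\bfy}}\,P^N(\bfy,t|\bfx,0)\,\ket{\bar{\mu}^N_{\bfy}}.
\]
It remains to identify $\tfrac{c_{\bfx}}{c_{\bfy}}P^N(\bfy,t|\bfx,0)$ with $\bar{P}^N(\bfy,t|\bfx,0)$. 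Since $\bar{H}^N_K$ is the diagonal conjugate of $H^N_K$, one has $\bar{P}^N(\bfy,t|\bfx,0)=\tfrac{V_N(\bfy)}{V_N(\bfx)}P^N(\bfy,t|\bfx,0)$, where $V_N(\bfx)=q^{(N/L)\sum_j(2x_j-L-1)}$ is the diagonal entry of $V(q^{-2N/L})$ on $\ket{\bfx}$. The claim thus reduces to verifying that $V_N(\bfx)\,c_{\bfx}$ is independent of $\bfx$; indeed $V_N(\bfx)\,c_{\bfx}=z^{K}$, so the two diagonal factors cancel against each other. This cancellation is the crux of the calculation, and the only genuine work lies in the bookkeeping of the two preceding steps—keeping straight which diagonal factors are scalar on $\Omega_N$ as opposed to $\Omega_K$.

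Conceptually I would also record the self-contained route mirroring the proof of Theorem~\ref{theo2}, which exhibits the algebraic origin directly. Applying the $N$-particle version of \eref{sym} with $\alpha=q$ and the upper sign gives $\beta^+=q^{-2N}$ and hence the exact intertwining $(S^+(q,q))^{N-K}\,\bar{H}^K_N = \bar{H}^N_K\,(S^+(q,q))^{N-K}$ on $\Omega_N$, with \emph{both} generators already equal to those in the theorem. Transposing and using $H^T(q,\alpha,\beta)=\hat{R}\,H(q,\alpha,\beta)\,\hat{R}^{-1}$ (from \eref{symgen1} and \eref{HTper}), together with \eref{SpmT} and \eref{reflrep}, turns this into a particle-creating intertwiner $\rme^{-\bar{H}^K_N t}\,\tilde{B} = \tilde{B}\,\rme^{-\bar{H}^N_K t}$ with $\tilde{B}=(S^-(q^{-1},q))^{N-K}:\Omega_K\to\Omega_N$. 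On this route the delicate point—the main obstacle—is to show that $\tilde{B}\ket{\bfx}$ is proportional to $\ket{\bar{\mu}^N_{\bfx}}$, i.e.\ to identify the restricted SAM with the image of the $K$-particle configuration $\ket{\bfx}$ under the deformed creation operators, with exactly the $q$-weighting prescribed by \eref{SAMI}; once that identification is in place the theorem follows verbatim as above.
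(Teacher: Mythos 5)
Your first route is correct and complete, but it is genuinely different from the paper's proof. The paper proves Theorem~\ref{theo3} directly, in parallel to (and independently of) Theorem~\ref{theo2}: it specializes the intertwining relation \eref{sym} to $\alpha=q^{-1}$, $\beta^-=q^{2K}$, $n=N-K$ (giving \eref{sym5}), applies the space reflection \eref{symgen1}, \eref{reflrep} to obtain $H(q,q,q^{-2K})\,(S^-(q^{-1},q))^{N-K}\ket{\bfx} = (S^-(q^{-1},q))^{N-K}\,H(q,q,q^{-2N})\ket{\bfx}$, exponentiates, and then invokes \eref{SAMUqI} of Lemma~\ref{Lem3} to identify $(S^-(q^{-1},q))^{N-K}\ket{\bfx}/[N-K]_q!$ with $z^{K-N}\ket{\bar{\mu}^N_{\bfx}}$. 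You instead transport Theorem~\ref{theo2} through the diagonal conjugations $\bar{H}^K_N=V(q^{-2K/L})H^K_N V^{-1}(q^{-2K/L})$ and $\bar{H}^N_K=V(q^{-2N/L})H^N_K V^{-1}(q^{-2N/L})$, matching the two SAM families via the defining relation \eref{SAMII} restricted to $\Omega_N$; your bookkeeping is right --- $c_{\bfx}V_N(\bfx)=z^K$ is indeed independent of $\bfx$, so the diagonal factors picked up by the measures exactly compensate those picked up by the transition probabilities, and the argument closes. What your route buys is economy and insight: once Theorem~\ref{theo2} is established, no further use of Proposition~\ref{Prop1} or Lemma~\ref{Lem3} is needed, and it makes explicit that local and global conditioning differ only by the gauge \eref{Hgaugeper}, a fact the paper states but does not exploit in this proof. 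What the paper's route buys is that Theorem~\ref{theo3} stands on the algebraic input alone, without passing through Theorem~\ref{theo2}. Your second, ``self-contained'' sketch is in substance the paper's proof (your transposed $N$-particle intertwiner with $\alpha=q$, $\beta^+=q^{-2N}$ is equivalent, via \eref{HTper} and \eref{SpmT}, to the paper's \eref{sym5}--\eref{sym6}), and the ``main obstacle'' you flag --- showing $(S^-(q^{-1},q))^{N-K}\ket{\bfx}\propto\ket{\bar{\mu}^N_{\bfx}}$ with the correct $q$-weights --- is precisely \eref{SAMUqI} of Lemma~\ref{Lem3}, which the paper proves from Lemma~\ref{Lem1} by transposition.
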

\section{Proofs}

\subsection{Proof of Theorem \ref{theo1}}

\begin{proof}

We first note
\begin{lemma}
\label{Lem1}
Let 
\be
\tilde{S} = \sum_{n=0}^L \frac{\tilde{S}^+}{[n]_q!}, \quad \hat{Q}_x = 
q^{\sum_{i=1}^{x-1} \hat{n}_i - \sum_{i=x+1}^{L} \hat{n}_i } \hat{n}_{x}.
\ee
Then for a configuration $\bfx \in \Omega_{N}$ with $N=|\bfx|$ particles one has
\bel{aux1}
\bra{\bfx} \tilde{S} = \bra{s} \prod_{i=1}^{|\bfx|} \hat{Q}_{x_i}.
\ee
\end{lemma}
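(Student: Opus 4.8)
The plan is to read $\tilde S=\sum_{n=0}^{L}(\tilde S^+)^n/[n]_q!$ as the $q$-exponential of $\tilde S^+=S^+(q,q)=\sum_{k}\tilde S^+_k$ and to verify \eref{aux1} componentwise in the occupation basis. Since each $\hat Q_{x}$ is diagonal and the $\hat Q_{x_i}$ mutually commute, the right-hand side is the covector whose $\eta$-component equals $\prod_{i=1}^{N}Q_{x_i}(\eta)$; this vanishes unless $\eta(x_i)=1$ for every $i$, i.e.\ unless $\eta$ contains $\bfx$. The goal is thus to show that $\bra{\bfx}\tilde S$ is supported on the same configurations $\eta\supseteq\bfx$, with exactly these weights.

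First I would record the single-creation rule. Acting to the left, $\sigma^+_k$ fills an empty site $k$ and kills $\bra{\eta}$ if $k$ is already occupied; evaluating the diagonal factor of \eref{Sipm} on $\bra{\eta}$ via $\sigma^z_i\to 1-2\eta(i)$ and combining it with the prefactor at $\alpha=q$, the configuration-independent terms cancel exactly and one is left with
\be
\bra{\eta}\,\tilde S^+_k=\upsilon(k)\,q^{\sum_{i>k}\eta(i)-\sum_{i<k}\eta(i)}\,\bra{\eta^{+k}},
\ee
$\eta^{+k}$ being $\eta$ with a particle added at $k$ (the scalar $\upsilon(k)$ enforcing $k$ empty). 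This exact cancellation of constants is special to the point $\alpha=q$ and should be checked with care.

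Next I would use nilpotency $(\tilde S^+_k)^2=0$ and the braiding relation $\tilde S^+_k\tilde S^+_l=q^{2}\tilde S^+_l\tilde S^+_k$ for $k>l$ (inherited from the relations for $S^\pm(k)$ stated before \eref{repX}, the scalar prefactors in \eref{Sipm} being immaterial). Summing $q^{2\,\mathrm{inv}}$ over the $n!$ orderings of a fixed $n$-subset produces $\prod_{k=1}^n(1-q^{2k})/(1-q^2)=q^{n(n-1)/2}[n]_q!$, so that
\be
\frac{(\tilde S^+)^n}{[n]_q!}=q^{n(n-1)/2}\!\!\sum_{M\subseteq\Lambda:\,|M|=n}\!\!\tilde S^+_{m_1}\tilde S^+_{m_2}\cdots\tilde S^+_{m_n},\qquad m_1<m_2<\cdots<m_n .
\ee
Applied to $\bra{\bfx}$ and restricted to a fixed final $\eta$, only $M=\eta\setminus\bfx$ survives (any overlap with $\bfx$ is killed by nilpotency), and the single-creation rule yields the product of $q$-weights along the chain $\bfx\subset\bfx\cup\{m_1\}\subset\cdots\subset\eta$.

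The crux is the bookkeeping of exponents. I would split the particles of $\eta$ into the $N$ sites of $\bfx$ and the $n$ newly created sites $M$. Because $M$ is filled in increasing order, at step $j$ the earlier sites $m_1,\dots,m_{j-1}$ all lie to the left of $m_j$ and contribute $-(j-1)$; summed over $j$ this gives $-n(n-1)/2$, which cancels the prefactor $q^{n(n-1)/2}$. What survives is $\sum_{m\in M}\big(\#\{x\in\bfx:x>m\}-\#\{x\in\bfx:x<m\}\big)$, a sum of $\mathrm{sgn}(x-m)$ over old-new pairs. On the target side $\prod_i Q_{x_i}(\eta)$ the old-old pairs cancel by antisymmetry, leaving precisely the same old-new sum; hence the two covectors coincide entry by entry and \eref{aux1} follows. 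The only genuinely delicate point is this exponent match, in particular that the $q^{n(n-1)/2}$ from the $q$-multinomial is cancelled by the newly created particles and that the old-old contributions drop out.
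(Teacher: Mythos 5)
Your proof is correct. Note that the paper itself does not prove Lemma \ref{Lem1} in situ: it only states that ``the proof is completely analogous to the proof in \cite{Beli15b} of (164) with $\bfy=\emptyset$'', so your argument is a self-contained substitute for a deferred computation rather than a rival to a written-out one; the method (read $\tilde{S}$ as the $q$-exponential of $\tilde{S}^+$, normal-order via the braiding relation, match exponents configuration by configuration) is the natural one and is surely close in spirit to the cited computation. I checked the three delicate points and they all hold: (i) at $\alpha=q$ the prefactor $q^{\frac12(L+1-2k)}$ in \eref{Sipm} exactly cancels the configuration-independent part of $q^{\frac12\sum_{i<k}(1-2\eta(i))-\frac12\sum_{i>k}(1-2\eta(i))}$, giving your single-creation rule; (ii) $\sum_{\sigma\in S_n}q^{2\,\mathrm{inv}(\sigma)}=\prod_{k=1}^n\frac{1-q^{2k}}{1-q^2}=q^{n(n-1)/2}[n]_q!$ with the \emph{symmetric} $q$-factorial \eref{qfactorial}, which is exactly the normalization needed; (iii) filling $M=\eta\setminus\bfx$ in increasing order produces the factor $q^{-n(n-1)/2}$ that kills the multinomial prefactor, and the residual exponent $\sum_{m\in M}\sum_{x\in\bfx}\mathrm{sgn}(x-m)$ agrees with the old--new part of $\prod_jQ_{x_j}(\eta)$ after the old--old contributions cancel by antisymmetry (this also covers the $n=0$ term, where both sides give weight $1$ on $\eta=\bfx$). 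You implicitly correct the typo $(\tilde S^+)^n$ in the numerator of the definition of $\tilde S$, which is the intended reading. The only cosmetic remark is that the vanishing on configurations overlapping $\bfx$ is enforced by the scalar $\upsilon(k)$ in your creation rule rather than by nilpotency per se, but both mechanisms are available and the conclusion is the same.
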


The proof is completely analogous to the proof in \cite{Beli15b} 
of (164) 
with $\bfy=\emptyset$.

Now we observe that with the reversible measure \eref{ASEPrevmeas} and with \eref{felement} 
we can write
\bel{aux2}
D(\bfx,\eta) = \pi^{-1}(\bfx) \bra{s} \prod_{i=1}^{|\bfx|} \hat{Q}_{x_i} \ket{\eta} 
=f^{\bfx}(\eta)=g^\eta(\bfx).
\ee
Then the following chain of equalities holds and proves the theorem:
\bea
\label{eq1}
\exval{f^{\bfx}(t)}^\eta & := & \sum_\xi f^{\bfx}(\xi) \bra{\xi} \rme^{-\tilde{H}t}  \ket{\eta}  \\
\label{eq2}
& = & \sum_\xi \pi^{-1}(\bfx) \bra{s} \prod_{i=1}^{|\bfx|} \hat{Q}_{x_i}  \ket{\xi}
\bra{\xi} \rme^{-\tilde{H}t}  \ket{\eta}  \\
\label{eq3}
& = & \pi^{-1}(\bfx) \bra{s} \prod_{i=1}^{|\bfx|} \hat{Q}_{x_i}  
\rme^{-\tilde{H}t}  \ket{\eta}  \\
\label{eq4}
& = & \pi^{-1}(\bfx) \bra{\bfx} \tilde{S} \rme^{-\tilde{H}t}  \ket{\eta}  \\
\label{eq5}
& = & \pi^{-1}(\bfx) \bra{\bfx} \rme^{-\tilde{H}t} \tilde{S}   \ket{\eta}  \\
\label{eq6}
& = & \pi^{-1}(\bfx) \sum_{\bfy} \bra{\bfx}  
\rme^{-\tilde{H}t} \ket{\bfy}\bra{\bfy} \tilde{S}   \ket{\eta}  \\
\label{eq7}
& = &  \sum_{\bfy\in\Omega_N} \bra{\bfx} \hat{\pi}^{-1}
\rme^{-\tilde{H}t} \hat{\pi} \ket{\bfy} 
\pi^{-1}(\bfy) \bra{s} \prod_{i=1}^{|\bfy|} \hat{Q}_{y_i}  \ket{\eta}  \\
\label{eq8}
& = &  \sum_{\bfy\in\Omega_N} \bra{\bfy} 
\rme^{-\tilde{H}t} \ket{\bfx} 
\pi^{-1}(\bfy) \bra{s} \prod_{i=1}^{|\bfy|} \hat{Q}_{y_i}  \ket{\eta}  \\
\label{eq9}
& = &  \sum_{\bfy\in\Omega_N} g^\eta(\bfy) \bra{\bfy} 
\rme^{-\tilde{H}t} \ket{\bfx}  \\
\label{eq10}
& =: & \exval{g^{\eta}(t)}^{\bfx}
\eea
The following ingredients were used: Eqs. \eref{eq1} and \eref{eq10}: The expressions \eref{exvalMarkov}
for expectations; Eqs.  \eref{eq2} and \eref{eq9}: The expression \eref{aux2} for the duality function; Eq. \eref{eq3}: The expressions \eref{exvalMarkov} and the representation
\eref{unit} of the unit matrix of dimension $2^L$; Eq.
\eref{eq4}: The expression \eref{aux1} of part of the duality function 
in terms of the symmetry operator $\tilde{S}$; Eq. \eref{eq5}: The 
$U_q[\mathfrak{sl}(2)]$symmetry \eref{symmetryH1}; Eq. \eref{eq6}: Particle number 
conservation and the representation
\eref{projectorN} of the unit matrix in the subspace of $N$ particles; Eq. 
\eref{eq7}: The diagonal matrix representation of the reversible measure 
\eref{ASEPrevmeas}; Eq. \eref{eq8}: Reversibility \eref{revASEPrefl}.
\hfill \qed
\end{proof}

\subsection{Proofs of Theorems \ref{theo2} and \ref{theo3}}

Before we set out to prove Theorem \ref{theo2} and Theorem \ref{theo3}
we show that the SAM \eref{SAMII}
can be generated by the action of the particle creation operator $U_q[\mathfrak{sl}(2)]$.

\begin{lemma}
\label{Lem3}
Let $\bfx$ be a configuration of $K=|\bfx|$ particles and let 
\be
\bar{\mu}^N_{\bfx} := \bar{\mu}_{\bfx} \delta_{\sum_{k=1}^L \eta(k), N}, \quad
\mu^N_{\bfx} := \mu_{\bfx} \delta_{\sum_{k=1}^L \eta(k), N}
\ee
be the SAM's defined by \eref{SAMI}, \eref{SAMII} restricted to $N \geq K$ particles. Then the
vector representations $\ket{\bar{\mu}^N_{\bfx}}
:= \mathbf{1}_N \ket{\bar{\mu}_{\bfx}}$ and
$\ket{\mu^N_{\bfx}}
:= \mathbf{1}_N \ket{\mu_{\bfx}}$ can be written as
\bea
\label{SAMUqI}
\ket{\bar{\mu}^N_{\bfx}} & = & z^{N-K}
\frac{\left(S^-(q^{-1},q)\right)^{N-K}}{[N-K]_q!} \ket{\bfx} \\
\label{SAMUqII}
\ket{\mu^N_{\bfx}} & = & z^{N-K} V(q^{\frac{2(K-N)}{L}}) \frac{\left(S^-(q^{-1},q^{1-\frac{N}{L}})\right)^{N-K}}{[N-K]_q!} \ket{\bfx}
\eea
in terms of the generators \eref{Spmz} of $U_q[\mathfrak{sl}(2)]$ and the transformation
\eref{Vg}.
\end{lemma}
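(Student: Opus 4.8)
The plan is to prove \eref{SAMUqI} first and then deduce \eref{SAMUqII} from it by conjugation with the diagonal transformations \eref{Vg} and \eref{numb}. For \eref{SAMUqI} I would argue by induction on the number $N-K$ of particles that still have to be created. The starting point is the explicit form of the creation operator obtained by setting $\alpha=q$ in \eref{Sipm}: the twist prefactors $\alpha^{\mp\frac{1}{2}(L+1-2k)}$ cancel against the $q^{\pm\frac{1}{2}(2k-L-1)}$ that appears when the $\sigma^z_i$ are rewritten through $\hat n_i$, so that $S^-_k(q^{-1},q)=q^{\sum_{i<k}\hat n_i-\sum_{i>k}\hat n_i}\sigma^-_k$. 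This is precisely the operator $\hat Q_x$ of Lemma \ref{Lem1} with the projector $\hat n_x$ replaced by the creation operator $\sigma^-_x$, which is the structural reason the two statements are linked and why the combinatorics mirrors that of Lemma \ref{Lem1}.

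For the base case $N=K$ one checks that $\ket{\bar\mu^K_\bfx}=\ket\bfx$: the projectors $\hat n_{x_j}$ in \eref{SAMIalt} force occupation exactly on $\bfx$, and the weight $\prod_j q^{-\sum_{i<x_j}\eta(i)+\sum_{i>x_j}\eta(i)}$ evaluated at $\eta=\bfx$ equals $1$, because the contribution of the ordered pair $(x_a,x_b)$ cancels that of $(x_b,x_a)$. For the inductive step I would apply $S^-(q^{-1},q)$ to $\ket{\bar\mu^N_\bfx}$ and collect the coefficient of a fixed $(N+1)$-particle configuration $\eta'\supseteq\bfx$. Each contribution comes from adding the last particle at one of the $M':=N+1-K$ sites of $\eta'\setminus\bfx$; the weight exerted on this last particle by the already-present particles (both the frozen $\bfx$ and the earlier new ones) combines with the stored SAM weight to reproduce the SAM weight of $\eta'$ times a single factor $q^{\ell_m-r_m}$, where $\ell_m,r_m$ count only the \emph{new} particles to the left and right of the added site $m$. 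Summing over which new particle is added last gives $\sum_{a=1}^{M'}q^{2a-1-M'}=[M']_q$, hence $S^-(q^{-1},q)\ket{\bar\mu^N_\bfx}=z^{-1}[M']_q\,\ket{\bar\mu^{N+1}_\bfx}$; iterating from the base case and inserting the normalisation $[N-K]_q!$ yields \eref{SAMUqI}. The main obstacle is exactly this $q$-number identity: one must verify that the frozen $\bfx$-weights are order-independent and drop out against the stored weight, so that only the mutual weight among the new particles has to be resummed.

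To obtain \eref{SAMUqII} I would start from the definition \eref{SAMII}, $\ket{\mu_\bfx}=z^{-K}V(q^{2K/L})\prod_j W(q^{(2x_j-L-1)/L})\ket{\bar\mu_\bfx}$. Since $V(\gamma)$ and $W(z)$ are diagonal they commute with the particle-number projector \eref{projectorN}, so restricting to $N$ particles turns each $W$ into the scalar $q^{N(2x_j-L-1)/L}$ while $V(q^{2K/L})$ survives as an operator; substituting \eref{SAMUqI} for $\ket{\bar\mu^N_\bfx}$ then expresses $\ket{\mu^N_\bfx}$ through $V(q^{2K/L})(S^-(q^{-1},q))^{N-K}\ket\bfx$ up to an explicit scalar. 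The final step is to move the diagonal factor through the creation operators using the gauge covariance \eref{Sgauge}, which holds verbatim with $q$ replaced by $q^{-1}$ in the first slot because $V$ acts on $S^\pm_k$ only through the $\sigma^\pm_k$ via \eref{Vgsig}: writing $V(\gamma)S^-(q^{-1},\alpha)V^{-1}(\gamma)=S^-(q^{-1},\alpha\gamma^{-1})$, an appropriate split of $V(q^{2K/L})$ shifts the twist parameter from $\alpha=q$ to $\alpha=q^{1-N/L}$, while the residual diagonal operator acts on $\ket\bfx$ and on the $N$-particle sector to reassemble the right-hand side of \eref{SAMUqII}. This part is routine once \eref{SAMUqI} is in hand: it is purely the bookkeeping of diagonal $q$-powers, with the gauge relation \eref{Sgauge} doing all the real work.
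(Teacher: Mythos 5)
Your proposal is correct, but it reaches the key identity \eref{SAMUqI} by a genuinely different route than the paper. The paper does not induct: it takes the operator identity \eref{aux1} of Lemma \ref{Lem1} (whose combinatorial content is outsourced to a proof in \cite{Beli15b}), transposes it via \eref{SpmT} to obtain
$\sum_{n=0}^{L-K}\frac{(S^-(q^{-1},q))^{n}}{[n]_q!}\ket{\bfx}
=\prod_{j}q^{-\sum_{i<x_j}\hat n_i+\sum_{i>x_j}\hat n_i}\hat n_{x_j}\ket{s}$,
then conjugates with $V(\gamma)$ and uses \eref{Sgauge}, \eref{Snumber} to insert the fugacity $z$ and shift the twist parameter, and finally applies $\mathbf{1}_N$ so that only the $n=N-K$ term survives on the left while $N$ may be replaced by $\hat N$ on the right; choosing $\gamma=\lambda=1$ gives \eref{SAMUqI} and $\lambda=q^{2N/L}$, $\gamma=q^{2(K-N)/L}$ gives \eref{SAMUqII}. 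Your induction replaces the appeal to Lemma \ref{Lem1} by a self-contained computation: the resummation $\sum_{a=1}^{M'}q^{2a-1-M'}=[M']_q$ over which new particle is created last, together with the observation that the cross-weights between the frozen shock positions and the created particles reproduce exactly the ratio of SAM weights, is precisely the combinatorics hidden in the cited proof of Lemma \ref{Lem1}. Your route is therefore more elementary and self-contained (and makes the mechanism $S^-\ket{\bar{\mu}^N_{\bfx}}=z^{-1}[N+1-K]_q\ket{\bar{\mu}^{N+1}_{\bfx}}$ explicit), at the cost of redoing work the paper reuses from Theorem \ref{theo1}; the passage from \eref{SAMUqI} to \eref{SAMUqII} is essentially the same diagonal bookkeeping in both treatments. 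One caution: if you actually carry out your final conjugation you will land on the twist parameter $q^{1-\frac{2N}{L}}$, not the $q^{1-\frac{N}{L}}$ printed in \eref{SAMUqII}; the former is what the paper's own chain \eref{p2}--\eref{p3} produces and what is used in the proof of Theorem \ref{theo2}, so the printed exponent is a typo and you should trust your computation over the displayed statement (similar harmless normalization discrepancies in powers of $z$ exist between \eref{SAMII} and \eref{SAMIIb}).
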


\begin{proof}
From Lemma \ref{Lem1} and \eref{SpmT} one finds
\bel{p1}
\sum_{n=0}^{L-K} \frac{\left(S^-(q^{-1},q)\right)^{n}}{[n]_q!} \ket{\bfx} 
= \prod_{j=1}^K 
q^{-\sum_{i=1}^{x_j-1} \hat{n}_i + \sum_{i=x_j+1}^{L} \hat{n}_i} \hat{n}_{x_j}  
\ket{s}.
\ee
Notice that for $z=1$ one has $\ket{z=1}=\ket{s}$.

The transformation \eref{Vg} yields $V(\gamma) \ket{\bfx} = \gamma^{-\frac{1}{2}
\sum_{j=1}^K (2 x_j-L-1)} \ket{\bfx}$ and \eref{Sgauge} gives
$S^{-}(q^{-1},q) = V^{-1}(\lambda) S^{-}(q^{-1},q\lambda^{-1}) V(\lambda)$.
Putting this together and using \eref{Snumber} turns \eref{p1} into
\bea
\label{p2}
& & V(\gamma) \sum_{n=0}^{L-K} \frac{z^n \left(S^-(q^{-1},q\lambda^{-1})\right)^{n}}{[n]_q!} \ket{\bfx} \nonumber \\
& & = V(\gamma\lambda) \prod_{j=1}^K z^{-1} \lambda^{\half (2x_j-L-1)}
q^{-\sum_{i=1}^{x_j-1} \hat{n}_i + \sum_{i=x_j+1}^{L} \hat{n}_i} \hat{n}_{x_j}  
\ket{z}.
\eea

Now we choose $\lambda = q^{\frac{2N}{L}}$ and $\gamma=q^{\frac{2(K-N)}{L}}$ to obtain
\bea
\label{p3}
& & 
V(q^{\frac{2(K-N)}{L}})
\sum_{n=0}^{L-K} \frac{z^n \left(S^-(q^{-1},q^{1-\frac{2N}{L}})\right)^{n}}{[n]_q!} \ket{\bfx} 
\nonumber \\
& & = \prod_{j=1}^K z^{-1} 
q^{\frac{N}{L} (2x_j-L-1)-\frac{1}{L}\sum_{l=1}^L(2l-L-1)\hat{n}_l }
q^{-\sum_{i=1}^{x_j-1} \hat{n}_i + \sum_{i=x_j+1}^{L} \hat{n}_i} \hat{n}_{x_j}  
\ket{z}.
\eea

Finally one applies the projector $\mathbf{1}_N$ on both sides of the equation. On the
l.h.s. this projects out the term with $n=N-K$, corresponding to the r.h.s. 
of \eref{SAMUqII}.
On the r.h.s. the projection allows us to substitute the number $N$ in the first power
of $q$ by the number operator $\hat{N}$ \eref{numb}. Thus the terms proportional to $L+1$
cancel and the expression \eref{SAMIIb} remains under the projection operator. 
Therefore the r.h.s. is equal to $\ket{\mu^N_{\bfx}}$. Similarly one chooses
$\gamma=\lambda=1$ to obtain \eref{SAMUqI}. \qed
\end{proof}

\subsubsection{Proof of Theorem \ref{theo2}}

Now we are in a position to prove \eref{shocks}.

Consider a $K$-particle configuration $\bfx$
and the duality relation \eref{sym} with $n=N-K$ and $\beta=1$: 
\bea
& & \frac{(S^-(q,q^{2\frac{K}{L}-1}))^{N-K}}{[N-K]_q!} 
H(q,q^{2\frac{K}{L}-1},q^{2N-2K}) \ket{\bfx} \nonumber \\
\label{symb1a}
& & \quad =
H(q,q^{2\frac{K}{L}-1},1) 
\frac{(S^-(q,q^{2\frac{K}{L}-1}))^{N-K}}{[N-K]_q!} \ket{\bfx}.
\eea
With the transformation \eref{Vg} with 
$\gamma^L=q^{2K-2N}$
one uses \eref{Hgaugeper} to cast this in the form
\bea
& & 
H(q,q^{2\frac{K}{L}-1},1)
\frac{(S^-(q,q^{2\frac{K}{L}-1}))^{N-K}}{[N-K]_q!}
\ket{\bfx}  \nonumber \\
\label{symb1b}
& & \quad =  
\frac{(S^-(q,q^{2\frac{K}{L}-1}))^{N-K}}{[N-K]_q!} V^{-1}(\gamma)  H(q,q^{2\frac{N}{L}-1},1)  
V(\gamma) \ket{\bfx}
\eea
or, alternatively,
\bea
& & 
H(q,q^{2\frac{K}{L}-1},1) V^{-1}(\gamma) 
\frac{(S^-(q,q^{2\frac{N}{L}-1}))^{N-K}}{[N-K]_q!} 
\ket{\bfx}  \nonumber \\
\label{symb1c}
& & \quad =  V^{-1}(\gamma) 
\frac{(S^-(q,q^{2\frac{N}{L}-1}))^{N-K}}{[N-K]_q!}  
H(q,q^{2\frac{N}{L}-1},1)  
\ket{\bfx}.
\eea
Applying \eref{symgen1}, \eref{reflrep}, \eref{Vgsym} this turns into
\bea
& & 
H(q,q^{1-2\frac{K}{L}},1) V(\gamma) 
\frac{(S^-(q^{-1},q^{1-2\frac{N}{L}}))^{N-K}}{[N-K]_q!} 
\ket{\tilde{\bfx}}  \nonumber \\
\label{symb1d}
& & \quad =  V(\gamma) 
\frac{(S^-(q^{-1},q^{1-2\frac{N}{L}}))^{N-K}}{[N-K]_q!}  
H(q,q^{1-2\frac{N}{L}},1)  
\ket{\tilde{\bfx}}.
\eea
where $\ket{\tilde{\bfx}} = V^{-1}(\gamma) \ket{\bfx}$ is an arbitrary $K$-particle
configuration.

Since $H$ conserves particle number, this relation remains valid for any
power of $H$. Thus we find
\bea
& & 
\rme^{-H^{K}_N t} V(\gamma) 
\frac{(S^-(q^{-1},q^{1-2\frac{N}{L}}))^{N-K}}{[N-K]_q!} 
\ket{\tilde{\bfx}}  \nonumber \\
\label{symb1e}
& & \quad =  V(\gamma) 
\frac{(S^-(q^{-1},q^{1-2\frac{N}{L}}))^{N-K}}{[N-K]_q!}  
\rme^{-H^N_{K} t}
\ket{\tilde{\bfx}} \\
& & \quad =  \sum_{\eta'_{K}} V(\gamma) 
\frac{(S^-(q^{-1},q^{1-2\frac{N}{L}}))^{N-K}}{[N-K]_q!} \ket{\eta'_{K}}
\bra{\eta'_{K}} \rme^{-H^N_{K} t} \ket{\tilde{\bfx}}
\eea
where in the last equality we have inserted the unit operator restricted to
$K$-particle states.
Using \eref{SAMUqII} of Lemma \ref{Lem3} then proves \eref{shocks}. \qed

\subsubsection{Proof of Theorem \ref{theo3}}

The proof of Theorem \ref{theo3} is similar. For $\alpha=q^{-1}$ where $\beta = q^{2K}$ one has
\be
\label{sym5}
\frac{(S^-(q,q^{-1}))^{N-K}}{[N-K]_q!} 
H(q,q^{-1},q^{2N}) \ket{\bfx} =
H(q,q^{-1},q^{2K}) 
\frac{(S^-(q,q^{-1}))^{N-K}}{[N-K]_q!} \ket{\bfx}.
\ee
Applying space reflection \eref{symgen1}, \eref{reflrep} this becomes
\be\label{sym6}
\frac{(S^-(q^{-1},q))^{N-K}}{[N-K]_q!} 
H(q,q,q^{-2N}) \ket{\bfx} 
=
H(q,q,q^{-2K}) 
\frac{(S^-(q^{-1},q))^{N-K}}{[N-K]_q!} \ket{\bfx}.
\ee
Here we dropped the tilde over the configuration $\bfx$ since it is arbitrary.

Projecting on $N$ particles and iterating this duality over powers of 
$\bar{H}^K_N$ yields
\be
\label{sym7}
\rme^{-\bar{H}_N^K t} \frac{(S^-(q^{-1},q))^{N-K}}{[N-K]_q!} 
\ket{\bfx} 
= \frac{(S^-(q^{-1},q))^{N-K}}{[N-K]_q!} 
\rme^{-\bar{H}_K^N t} \ket{\bfx} .
\ee
Inserting the unit operator restricted to
$K$-particle states and
Using \eref{SAMUqI} of Lemma \ref{Lem3} then proves \eref{shocks2}. \qed

\begin{acknowledgement}
GMS thanks DFG and FAPESP for financial support.
\end{acknowledgement}

\section*{Appendix}
\addcontentsline{toc}{section}{Appendix}

We present some details of the proof of Proposition \eref{Prop1} 
which are not shown in \cite{Pasq90} 
and from which Proposition \eref{Prop1} follows by the similarity transformation \eref{Vg}.

We define $e_L(\cdot,\cdot,\cdot) := h_{L,1}(\cdot,\cdot,\cdot)$, see \eref{hoppingbound}.
By explizit matrix multiplications one finds from the relations \eref{projhat} for
the bulk operators
\bea
S_k^\pm(q,\alpha) e_L(\alpha',q',\beta) & = &
e_L(\alpha',q',\beta q^{-2}) S_k^\pm(q,\alpha) \quad 2 \leq k \leq L-1 \\
e_L(\alpha',q',\beta) S_k^\pm(q,\alpha) & = & S_k^\pm(q,\alpha) 
e_L(\alpha',q',\beta q^{2})  \quad 2 \leq k \leq L-1
\eea
and for the boundary operators
\bea 
S_1^+(q,\alpha) e_L(\alpha',q',\beta) & = & q^{-1/2} \alpha^{\half(L-1)} 
\left[ (q')^{-1} \sigma^+_1 \hat{\upsilon}_L - \alpha' \beta \hat{\upsilon}_1 \sigma^+_L \right]
q^{-S^z}\\
S_1^-(q,\alpha) e_L(\alpha',q',\beta) & = & q^{1/2} \alpha^{-\half(L-1)} 
\left[ q' \sigma^-_1 \hat{n}_L - (\alpha' \beta)^{-1} \hat{n}_1 \sigma^-_L \right]
q^{-S^z}\\
S_L^+(q,\alpha) e_L(\alpha',q',\beta) & = & q^{1/2} \alpha^{-\half(L-1)} 
\left[ q' \hat{\upsilon}_1 \sigma^+_L - (\alpha' \beta)^{-1} \sigma^+_1 \hat{\upsilon}_L \right]
q^{S^z}\\
S_L^-(q,\alpha) e_L(\alpha',q',\beta) & = & q^{-1/2} \alpha^{\half(L-1)} 
\left[ (q')^{-1} \hat{n}_1 \sigma^-_L - \alpha' \beta \sigma^-_1 \hat{n}_L \right]
q^{S^z}
\eea
and
\bea 
e_L(\alpha',q',\beta) S_1^+(q,\alpha) & = & q^{-1/2} \alpha^{\half(L-1)} 
\left[ q' \sigma^+_1 \hat{n}_L - \alpha' \beta \hat{n}_1 \sigma^+_L \right]
q^{-S^z}\\
e_L(\alpha',q',\beta) S_1^-(q,\alpha) & = & q^{1/2} \alpha^{-\half(L-1)} 
\left[ (q')^{-1} \sigma^-_1 \hat{\upsilon}_L - (\alpha' \beta)^{-1} \hat{\upsilon}_1 \sigma^-_L \right]
q^{-S^z}\\
e_L(\alpha',q',\beta) S_L^+(q,\alpha) & = & q^{1/2} \alpha^{-\half(L-1)} 
\left[ (q')^{-1} \hat{n}_1 \sigma^+_L - (\alpha' \beta)^{-1} \sigma^+_1 \hat{n}_L \right]
q^{S^z}\\
e_L(\alpha',q',\beta) S_L^-(q,\alpha) & = & q^{-1/2} \alpha^{\half(L-1)} 
\left[ q' \hat{\upsilon}_1 \sigma^-_L - \alpha' \beta \sigma^-_1 \hat{\upsilon}_L \right]
q^{S^z}.
\eea

Consider now $q=q'$ and $\alpha=\alpha'$. From the quantum algebra symmetry and
from the previous relations one obtains (omitting the $q,\alpha$-dependence)
\bea
S^\pm H(\beta) - H(\beta') S^\pm & = & S^\pm e_L(\beta) - e_L(\beta') S^\pm \\
& = & \left[ e_L(q^{-2}\beta) - e_L(\beta') \right] \sum_{k=2}^{L-1} S^\pm_k \nonumber\\
\label{pseudocomm}
& & + \left( S^\pm_1 + S^\pm_L \right) e_L(\beta) - e_L(\beta') \left( S^\pm_1 + S^\pm_L \right).
\eea

Observe that
\bea 
\label{Spm1}
& & S_1^+ = q^{-1/2} \alpha^{\half(L-1)} \sigma^+_1 q^{-S^z}, \quad
S_1^- = q^{1/2} \alpha^{-\half(L-1)} \sigma^-_1 q^{-S^z}, \\
\label{SpmL}
& & S_L^+ = q^{1/2} \alpha^{-\half(L-1)} \sigma^+_L q^{S^z}, \quad
S_L^- = q^{-1/2} \alpha^{\half(L-1)} \sigma^-_L q^{S^z}
\eea
and the auxiliary relations
\bea 
\sigma^+_1 e_L(\beta) & = & 
q^{-1} \sigma^+_1 \hat{\upsilon}_L - \alpha\beta \hat{\upsilon}_1 \sigma^+_L, \\
e_L(\beta) \sigma^+_1 & = & 
q \sigma^+_1 \hat{n}_L - \alpha\beta \hat{n}_1 \sigma^+_L \\
\sigma^+_L e_L(\beta) & = &
q \hat{\upsilon}_1 \sigma^+_L - (\alpha\beta)^{-1} \sigma^+_1 \hat{\upsilon}_L, \\
e_L(\beta) \sigma^+_L & = & 
q^{-1} \hat{n}_1 \sigma^+_L - (\alpha\beta)^{-1} \sigma^+_1 \hat{n}_L,
\eea
and
\bea 
\sigma^-_1 e_L(\beta) & = & 
q \sigma^-_1 \hat{n}_L - (\alpha\beta)^{-1} \hat{n}_1 \sigma^-_L, \\
e_L(\beta) \sigma^-_1 & = & 
q^{-1} \sigma^-_1 \hat{\upsilon}_L - (\alpha\beta)^{-1} \hat{\upsilon}_1 \sigma^-_L \\
\sigma^-_L e_L(\beta) & = &
q^{-1} \hat{n}_1 \sigma^-_L - \alpha\beta \sigma^-_1 \hat{n}_L, \\
e_L(\beta) \sigma^-_L & = & 
q \hat{\upsilon}_1 \sigma^-_L - \alpha\beta \sigma^-_1 \hat{\upsilon}_L.
\eea

Thus one obtains
\bea 
& & \left( S^+_1 + S^+_L \right) e_L(\beta) 
- e_L(\beta') \left( S^+_1 + S^+_L \right) \nonumber \\
& & = A^+(\beta,\beta') \beta^{1/2} \alpha^{L/2} q^{-S^z-1} +
B^+(\beta,\beta') \beta^{-1/2} \alpha^{-L/2} q^{S^z+1}
\eea
with
\bea 
A^+(\beta,\beta') & = &
\frac{q^{1/2}}{(\alpha\beta)^{1/2}}\sigma^+_1 
\left[q^{-1} \hat{\upsilon}_L - q \hat{n}_L\right] - 
\frac{(\alpha\beta)^{1/2}}{q^{1/2}} \sigma^+_L
\left[q \hat{\upsilon}_1 - q \frac{\beta'}{\beta}\hat{n}_1\right] \\
B^+(\beta,\beta') & = &
\frac{q^{1/2}}{(\alpha\beta)^{1/2}}\sigma^+_1 
\left[q^{-1} \frac{\beta}{\beta'}\hat{n}_L - q^{-1} \hat{\upsilon}_L\right] - 
\frac{(\alpha\beta)^{1/2}}{q^{1/2}} \sigma^+_L
\left[q^{-1} \hat{n}_1 - q \hat{\upsilon}_1\right] 
\eea
and
\bea 
& & \left( S^-_1 + S^-_L \right) e_L(\beta) 
- e_L(\beta') \left( S^-_1 + S^-_L \right) \nonumber \\
& & = A^-(\beta,\beta') \beta^{-1/2} \alpha^{-L/2} q^{-S^z+1} +
B^-(\beta,\beta') \beta^{1/2} \alpha^{L/2} q^{S^z-1}
\eea
with
\bea 
A^-(\beta,\beta') & = &
\frac{(\alpha\beta)^{1/2}}{q^{1/2}}\sigma^-_1 
\left[q \hat{n}_L - q^{-1} \hat{\upsilon}_L\right] - 
\frac{q^{1/2}}{(\alpha\beta)^{1/2}} \sigma^-_L
\left[q^{-1} \hat{n}_1 - q^{-1} \frac{\beta}{\beta'}\hat{\upsilon}_1\right] \\
B^-(\beta,\beta') & = &
\frac{(\alpha\beta)^{1/2}}{q^{1/2}}\sigma^-_1 
\left[q \frac{\beta'}{\beta}\hat{\upsilon}_L - q \hat{n}_L\right] - 
\frac{q^{1/2}}{(\alpha\beta)^{1/2}} \sigma^-_L
\left[q \hat{\upsilon}_1 - q^{-1} \hat{n}_1\right] .
\eea

With the choice $\beta' = q^{-2} \beta$ \eref{pseudocomm} reduces to
\bel{pseudocomm2}
S^\pm H(\beta) - H(q^{-2} \beta) S^\pm =
\left( S^\pm_1 + S^\pm_L \right) e_L(\beta) - e_L(q^{-2}\beta) 
\left( S^\pm_1 + S^\pm_L \right).
\ee
For $S^+$ the r.h.s. reduces to
\bea
& & \left\{\frac{q^{1/2}}{(\alpha\beta)^{1/2}}\sigma^+_1 
\left[q^{-1} \hat{\upsilon}_L - q \hat{n}_L\right] -
\frac{(\alpha\beta)^{1/2}}{q^{1/2}} \sigma^+_L
\left[q \hat{\upsilon}_1 - q^{-1} \hat{n}_1\right]\right\} \nonumber \\
& & \times \left[ \beta^{1/2} \alpha^{L/2} q^{-S^z-1}
- \beta^{-1/2} \alpha^{-L/2} q^{S^z+1} \right] \nonumber
\eea
With \eref{Spm1}, \eref{SpmL} one thus arrives at
\bea
S^+ H(\beta) - H(q^{-2} \beta) S^+ & = &
\left[q^{-1} \hat{\upsilon}_L - q \hat{n}_L\right] S^+_1 
\left[1-\beta^{-1}\alpha^{-L} q^{2S^z+2} \right] \nonumber \\
\label{pseudocommp}
& & + \left[q \hat{\upsilon}_1 - q^{-1} \hat{n}_1\right] S^+_L 
\left[1-\beta \alpha^{L} q^{-2S^z-2} \right].
\eea
Notice that the action of the pseudo commutator on states with 
particle number satisfying
\be 
q^{L-2N+2} = \beta \alpha^L
\ee
vanishes.

Similarly one obtains for $S^-$ the r.h.s. of \eref{pseudocomm2}
\bea
& & \left\{\frac{(\alpha\beta)^{1/2}}{q^{1/2}}\sigma^-_1 
\left[q^{-1} \hat{\upsilon}_L - q \hat{n}_L\right] -
\frac{q^{1/2}}{(\alpha\beta)^{1/2}} \sigma^-_L
\left[q \hat{\upsilon}_1 - q^{-1} \hat{n}_1\right]\right\} \nonumber \\
& & \times \left[ \beta^{1/2} \alpha^{L/2} q^{S^z-1}
- \beta^{-1/2} \alpha^{-L/2} q^{-S^z+1} \right] \nonumber
\eea
which yields
\bea
S^- H(\beta) - H(q^{-2} \beta) S^- & = &
-\left[q^{-1} \hat{\upsilon}_L - q \hat{n}_L\right] S^-_1 
\left[1-\beta\alpha^{L} q^{2S^z-2} \right] \nonumber \\
\label{pseudocommm}& & - \left[q \hat{\upsilon}_1 - q^{-1} \hat{n}_1\right] S^-_L 
\left[1-\beta^{-1} \alpha^{-L} q^{-2S^z+2} \right].
\eea
Notice that the action of the pseudo commutator on states with 
particle number satisfying
\be 
q^{-L+2N+2} = \beta \alpha^L
\ee
vanishes.

In compact form \eref{pseudocomm2} can thus be written
\bea
S^\pm H(\beta) - H(q^{-2} \beta) S^\pm & = &
\pm \left[q^{-1} \hat{\upsilon}_L - q \hat{n}_L\right] S^\pm_1 
\left[1-\beta^{\mp 1}\alpha^{\mp L} q^{2S^z\pm 2} \right] \nonumber \\
\label{pseudocomm3}
& & \pm \left[q \hat{\upsilon}_1 - q^{-1} \hat{n}_1\right] S^\pm_L 
\left[1-\beta^{\pm 1} \alpha^{\pm L} q^{-2S^z\mp 2} \right].
\eea

One can iterate. E.g. for $(S^-)^2$ one obtains
\bea
& & (S^-)^2 H(\beta) - H(q^{-4} \beta) (S^-)^2 \nonumber \\
& & \quad = (1+q^{-2}) \left[q \hat{n}_L - q^{-1} \hat{\upsilon}_L\right] 
S^-_1 \left(\sum_{k=2}^{L-1} S^-_k\right) 
\left[1-\beta\alpha^{ L} q^{2S^z -4} \right] \nonumber \\
\label{pseudocommm2}
& & \quad + (1+q^{-2})\left[q^{-1} \hat{n}_1 - q \hat{\upsilon}_1\right] 
\left(\sum_{k=2}^{L-1} S^-_k\right) S^-_L 
\left[1-\beta^{- 1} \alpha^{- L} q^{-2S^z+4} \right].
\eea
Iterating further as in \cite{Pasq90} one arrives at Proposition \ref{Prop1}.

\end{document}